\documentclass[times,doublespace]{amsart}
\usepackage{amsfonts,amssymb,amscd}
\usepackage{amsxtra,xy}
\usepackage{epsfig,amssymb}
\usepackage{mathrsfs}
\usepackage{epsfig}
\usepackage{bm} 
\usepackage{hyperref}

\usepackage[OT2,T1]{fontenc} 

\DeclareSymbolFont{cyrletters}{OT2}{wncyr}{m}{n}
\DeclareMathSymbol{\sha}{\mathalpha}{cyrletters}{"58}
\DeclareMathSymbol{\tha}{\mathalpha}{cyrletters}{"51}

\def\krn{{\rm{Ker}}\,}
\def\cok{{\rm{Coker}}\,}
\def\img{{\rm{Im}}\e}

\newcommand{\s}{\mathscr }

\newcommand{\res}{{\rm res}}
\newcommand{\trans}{{\rm trans}}
\newcommand{\tra}{{\rm tr}}

\newcommand{\Z}{{\mathbb Z}}
\newcommand{\Q}{{\mathbb Q}}

\newcommand{\spec}{\mathrm{ Spec}\,}
\def\le{\kern 0.03em}
\newcommand{\bg}{{\mathbb G}}

\def\fs{F^{\e\rm{s}}}

\newcommand\be{\kern -.1em}

\newcommand\lbe{\kern -.025em}
\newcommand\e{\kern 0.08em}

\newcommand\g{\varGamma}

\newcommand\gv{\g_{\be w}}

\newcommand{\kwv}{K_{\lbe w}}

\newcommand{\kwvf}{\kwv\le/F_{\be v}}

\newcommand{\lra}{\longrightarrow}

\newcommand{\Hom}{ \mathrm{Hom}} 

\newtheorem{lemma}{Lemma}[section]
\newtheorem{theorem}[lemma]{Theorem}

\newtheorem{proposition}[lemma]{Proposition}
\theoremstyle{definition}

\theoremstyle{remark}
\newtheorem{remark}[lemma]{Remark}
\newtheorem{remarks}[lemma]{Remarks}

\begin{document}

\title[On Tate-Shafarevich groups of 1-motives]{On Tate-Shafarevich groups of 1-motives} 

\author{Cristian D. Gonz\'alez-Avil\'es}
\address{Departamento de Matem\'aticas, Universidad de La Serena, Cisternas 1200, La Serena 1700000, Chile}
\email{cgonzalez@userena.cl}
\thanks{The author was partially supported by Fondecyt grants 1120003 and 1160004.}
\date{\today}     
\keywords{1-motive; Tate-Shafarevich group; global field.} 
\subjclass[2010]{Primary 11G35; Secondary 14G25.}

\begin{abstract} We establish the finiteness of the kernel and cokernel of the restriction map $\res^{\le i}\colon\sha^{\le i}\lbe(F,M\e)\to \sha^{\le i}\lbe(K,M\e)^{\g}$ for $i=1$ and $2$, where $M$ is a (Deligne) 1-motive over a global field $F$ and $K\be/\lbe F$ is a finite  Galois extension of global fields with Galois group $\g$.
\end{abstract}

\maketitle 

\input xy      
\xyoption{all}  

\section{Introduction}

Let $F$ be a global field with fixed separable algebraic closure $\fs$ and corresponding absolute Galois group $\g_{\! F}$. A (Deligne) 1-motive over $F$ is a complex in degrees $-1$ and $0$ of commutative $F$-group schemes $M=[\e Y\to G\,]$, where $Y$ is \'etale-locally isomorphic to $\Z^{r}$ for some integer $r\geq 0$ and $G$ is a semiabelian variety over $F$ given as an extension $0\to T\to G\to A\to 0$, where $A$ is an abelian variety over $F$ and $T$ is an $F$-torus. The Tate-Shafarevich group in degree $i$ of $M$, where $i=1$ or $2$, is 
by definition
\begin{equation}\label{fi}
\sha^{\e i}\lbe(F,M\e)=\krn\!\be\left[\e H^{\le i}(F,
M\e)\to\displaystyle\prod_{\text{all $v$}}\! H^{\e i}(F_{\be v},M\e)\right],
\end{equation}
where the indicated map is a natural localization map. The cohomology groups appearing above are flat (fppf) cohomology groups. However, since $Y$ and $G$ are smooth and commutative $F$-group schemes, a standard five-lemma argument applied to a certain diagram enables us to identify, when convenient, the fppf cohomology groups $H^{\e i}(F,M\le)$ and the Galois cohomology groups $H^{\e i}(\g_{\be F}, M(\lbe\fs))$. Consequently, the above groups, and all other groups appearing in this paper which are closely related to them (e.g., those in Theorem \eqref{main1} below), can be computed using either flat or Galois cohomology. It is well-known that the groups \eqref{fi} are objects of great arithmetical interest which are notoriously difficult to study. For example, if $M=A$ is an abelian variety over $F$, then $\sha^{\e 1}\be(F,A\e)$ encodes important arithmetical information about $A$ but even the finiteness of this group is a major outstanding conjecture in number theory. Regarding the latter group, the author \cite{ga00} and Yu \cite{yu} studied the restriction map $\res^{\le 1}\colon\sha^{\le 1}(F,A\e)\to \sha^{\le 1}(K,A\e)^{\g}$ associated to a finite Galois extension of number fields $K\be/\lbe F$ with Galois group $\g$ and used that study to relate the orders (assumed finite) of $\sha^{\le 1}(F,A\e)$ and $\sha^{\le 1}(F,A^{\chi}\e)$, where $A^{\chi}$ is a quadratic twist of $A$. In this paper we extend the methods of \cite{yu} to study, independently of any finiteness hypotheses, the restriction map $\res^{\le i}\colon\sha^{\le i}\lbe(F,M\e)\to \sha^{\le i}\lbe(K,M\e)^{\le\g}$ for $i=1$ and $2$ and arbitrary 1-motives $M$ over arbitrary global fields $F$. We obtain the following finiteness result.

\begin{theorem}\label{main1} Let $M$ be a $1$-motive over a global field $F$ and let
$K\be/\lbe F$ be a finite Galois extension of global fields with Galois group $\g$.
Then, for $i=1$ and $2$, the kernel and cokernel of the restriction map 
\[
\res^{\le i}\colon\sha^{\le i}\lbe(F,M\e)\to \sha^{\le i}\lbe(K,M\e)^{\g}
\]
are finite groups annihilated by a power of $[\e K\lbe\colon \! F\,]$.
\end{theorem}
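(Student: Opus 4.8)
The plan is to compare the local-global sequences defining $\sha^i$ over $F$ and over $K$ and, via a snake-lemma argument, to reduce the problem to the restriction map on the ambient cohomology groups, both globally and place-by-place. Writing $P^{i}(F)=\prod_{v}H^{i}(F_{v},M)$ and $P^{i}(K)=\prod_{w}H^{i}(K_{w},M)$, we have $\sha^{i}(F,M)=\krn[\,H^{i}(F,M)\to P^{i}(F)\,]$ and similarly over $K$. Taking $\g$-invariants of the (left exact) $K$-sequence and using that the localization maps are $\g$-equivariant, I would form the commutative diagram with exact rows
\[
\begin{CD}
0 @>>> \sha^{i}(F,M) @>>> H^{i}(F,M) @>>> B^{i}(F) @>>> 0\\
@. @VV{a}V @VV{b}V @VV{c}V @.\\
0 @>>> \sha^{i}(K,M)^{\g} @>>> H^{i}(K,M)^{\g} @>>> B^{i}(K) @>>> 0,
\end{CD}
\]
where $B^{i}(F)$ and $B^{i}(K)$ denote the images of the respective localization maps. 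The snake lemma then yields an exact sequence $0\to\krn a\to\krn b\to\krn c\to\cok a\to\cok b\to\cok c\to 0$, so it suffices to control the kernel and cokernel of the global map $b$ and of the local map $c$.

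For the global map $b$, the Hochschild--Serre spectral sequence $H^{p}(\g,H^{q}(K,M))\Rightarrow H^{p+q}(F,M)$ identifies $\krn b$ and $\cok b$ with subquotients of the groups $H^{p}(\g,H^{q}(K,M))$ with $p\ge 1$; since $\g$ is finite, every such group is annihilated by $[K\colon F]$. For $i=1$ only $\mathbb{H}^{0}(K,M)$ enters, and this group is finitely generated---the abelian part of $G(K)$ by the Mordell--Weil/Lang--N\'eron theorem, the toric part by Dirichlet's unit theorem, and $Y(K)$ as a subgroup of a finitely generated group---so $H^{1}(\g,\mathbb{H}^{0}(K,M))$ and $H^{2}(\g,\mathbb{H}^{0}(K,M))$ are finite.

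For the local map $c$, whose kernel and cokernel are controlled by those of $P^{i}(F)\to P^{i}(K)^{\g}$, I would apply Shapiro's lemma to pass to the decomposition groups $\g_{w}=\mathrm{Gal}(K_{w}/F_{v})$ and reduce to the restriction maps $H^{i}(F_{v},M)\to H^{i}(K_{w},M)^{\g_{w}}$. The local Hochschild--Serre sequences show that their kernels and cokernels are finite---here one uses that the local cohomology groups $\mathbb{H}^{q}(K_{w},M)$ are finite for $q\ge 1$ and have finite Tate cohomology for $q=0$---and are killed by $[K_{w}\colon F_{v}]\mid[K\colon F]$; a good-reduction and unramifiedness argument (via Lang's theorem) shows they vanish for all but finitely many $v$. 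Hence $\krn c$ and $\cok c$ are finite and annihilated by $[K\colon F]$, and the snake sequence gives the theorem for $i=1$.

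The case $i=2$ is where the main difficulty lies, and I expect it to be the crux. Now the global terms controlling $\krn b$ and $\cok b$ involve $H^{p}(\g,\mathbb{H}^{q}(K,M))$ with $q\ge 1$, and since $\mathbb{H}^{1}(K,M)$ is far from finitely generated these need not be finite; annihilation by $[K\colon F]$ persists but finiteness is lost. The resolution is that an element of $\sha^{2}(F,M)$ is locally trivial, so upon comparing the global Hochschild--Serre filtration with its local counterparts under localization, the associated $\g$-cohomology classes are forced to take values in $\sha^{q}(K,M)$ rather than in all of $\mathbb{H}^{q}(K,M)$. Thus $\krn a$ and $\cok a$ are in fact governed by the groups $H^{p}(\g,\sha^{q}(K,M))$ with $p\ge 1$, together with the already-finite local contributions. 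Since $\sha^{1}(K,M)$ is of cofinite type, its divisible part has finite corank and $H^{p}(\g,\sha^{1}(K,M))$ is finite for $p\ge 1$; combined with the finiteness of $H^{p}(\g,\sha^{0}(K,M))$ this yields finiteness, and annihilation by a power of $[K\colon F]$ follows as before. As an independent route one may instead invoke the Harari--Szamuely duality $\sha^{1}(F,M)\times\sha^{2}(F,M^{\vee})\to\Q/\Z$, under which restriction is adjoint to corestriction; together with $\mathrm{cor}\circ\res=[K\colon F]$ and $\res\circ\,\mathrm{cor}=N_{\g}$, this reduces the $i=2$ statement for $M$ to the $i=1$ statement already proved for $M^{\vee}$. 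The hard part is making the comparison of global and local spectral sequences precise enough to replace $\mathbb{H}^{q}(K,M)$ by $\sha^{q}(K,M)$, and verifying the cofinite-type input that renders the resulting $\g$-cohomology finite.
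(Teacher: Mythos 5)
Your snake-lemma skeleton is fine, but the finiteness inputs you feed into it are false, and this is the central gap. You claim $\krn b$ and $\cok b$ are finite for the global restriction $b\colon H^{i}(F,M)\to H^{i}(K,M)^{\g}$ because $H^{0}(K,M)$ is finitely generated. It is not: $H^{0}(K,M)$ contains $\cok[\e Y(K)\to G(K)]$, and the toric part of $G(K)$ has infinite rank (Dirichlet's theorem concerns $S$-units, not $K^{*}$). Worse, $\krn b$ itself is genuinely infinite in examples: take $M=T=R^{1}_{K/F}\bg_{m}$, the norm-one torus of $K/F$. Then $H^{1}(F,T)=F^{*}/N_{K/F}K^{*}$, which is infinite (for $K/F=\Q(i)/\Q$ the classes of the primes $p\equiv 3\pmod 4$ are pairwise distinct and nontrivial), while $T_{K}\simeq \bg_{m,K}^{\,[K:F]-1}$ gives $H^{1}(K,T)=0$ by Hilbert 90, so $\krn b=H^{1}(F,T)$. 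Since the snake lemma only gives $\krn(\res^{1})\subset\krn b$, no finiteness follows; the difficulty you defer to $i=2$ is already fully present at $i=1$, and only the annihilation-by-a-power-of-$[\e K\colon\! F\e]$ statements survive. (Your local claims are also wrong as stated: $H^{1}(K_{w},A)$ is infinite for an abelian variety of positive dimension over a nonarchimedean local field, being Pontryagin dual to the compact group of points of the dual abelian variety; and working with the full products $\prod_{v}$ makes the image groups $B^{i}$ and the map $c$ hard to control, which is why the paper works with direct sums instead.)

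The missing idea is that finiteness cannot come from the ambient cohomology at all; it must come from the Tate--Shafarevich groups themselves. The paper proves only finite-exponent statements for all the restriction maps in sight --- by restriction--corestriction, which for a 1-motive requires constructing the Weil restriction $R_{K/F}(M_{K})$ and a norm morphism of complexes (Lemma \ref{hi}) --- works inside the subgroups $\sha^{\le i}_{\omega}$ of classes locally trivial at almost all places, so that localization takes values in $\bigoplus_{v}$ rather than $\prod_{v}$ (Lemmas \ref{lexp}, \ref{sfex}, \ref{shx}), and then converts finite exponent into finiteness using that $\sha^{\le i}(F,M)(\ell)$ and $\sha^{\le i}(K,M)(\ell)$ are of finite cotype (Lemma \ref{fcot}, new in degree $2$ over function fields) together with the elementary Lemma \ref{es2}. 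Your sketch for $i=2$ --- forcing the relevant $\g$-cohomology classes to take values in $\sha^{\le q}(K,M)$ and then invoking cofinite type --- is in spirit exactly what is needed, but it is the part you leave unproved, and it is needed for $i=1$ as well. Finally, the proposed duality shortcut misquotes Harari--Szamuely: their pairings couple $\sha^{\le i}(M)$ with $\sha^{\le 2-i}(M^{\le *})$, so $\sha^{\le 2}$ pairs with a suitably completed $\sha^{\le 0}$, not with $\sha^{\le 1}$; moreover the pairings are non-degenerate only modulo divisible subgroups and have $p$-part complications over function fields, so they cannot substitute for the argument.
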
 

\smallskip

We should note that, when $M=G$ is a semiabelian variety (and thus an algebraic group), the proof of that part of the theorem which refers to the kernel of the indicated map is not difficult and follows from a standard restriction-corestriction argument. However, the fact that $M$ is, in general, only a complex of algebraic groups makes the proof of the theorem rather involved for both the kernel and cokernel of $\res^{\le i}$.

\smallskip

The proof of the theorem will show that, in fact, $\krn\be(\res^{\le i}\le)$ is annihilated by $[\e K\lbe\colon \! F\,]$ (see Remark \ref{last}). Regarding the case $i=1$ of the theorem, we note that both the source and the target of $\res^{1}$ are finite groups if (as widely expected) $\sha^{\le 1}\lbe(K,A\e)$ is a finite group, where $A$ is the abelian variety part of $M$. However, when $i=2$, Jossen \cite[\S7]{joss} has constructed an example of a semiabelian variety $G$ over $\Q$ such that the group $\sha^{\e 2}\lbe (\Q,G\e)$ is infinite. 

We briefly indicate the contents of each Section. Section \ref{pre} consists of preliminaries. Section \ref{1-mot} discusses 1-motives over fields (e.g., global and local fields). Section \ref{gr} establishes that the primary components of Tate-Shafarevich groups of 1-motives of global fields in cohomological degrees 1 and 2 are groups of finite cotype. This result is new 
in cohomological degree 2 over global function fields and is well-known in the remaining cases, although no formal statement to this effect seems to have appeared in print before. Section \ref{mai1} contains the proof of Theorem \ref{main1}. The Appendix \ref{app} repairs an error which appears in a reference relevant to this paper.

\section*{Acknowledgements}
I thank Alessandra Bertapelle for pointing out an error in a preliminary version of this paper and for suggesting a number of improvements to the presentation, as well as K\c{e}stutis \v{C}esnavi\v{c}ius for pointing out, and helping me fix, a (fortunately inconsequential) error in \cite[proof of Proposition 4.10]{ga09}. See the Appendix. I am also grateful to the referee of this paper for agreeing to review it and for pointing out the need to clarify certain parts of it. Finally, I thank Chile's Fondecyt for financial support via research grants 1120003 and 1160004.

\section{Preliminaries}\label{pre}

For any abelian group $B$ and positive integer $n$, we will write
$B_{n}$ for the $n$-torsion subgroup of $B$ and $B/n$ for $B/nB$. If $B$ is finite, its order will be denoted by $|\le B\le|$. Let $\ell$ be a prime number. We will write $B(\e\ell\e)$ for the $\ell$-primary torsion subgroup of $B$ and $B^{(\e\ell\e)}$ for $\varprojlim_{\e m}(B/\e\ell^{\e m})$.

If $B$ is a topological abelian group, we will write
$B^{D}=\text{Hom}_{\le\text{cont.}}(B,\Q/\Z)$ and endow it with the
compact-open topology, where $\Q/\Z$ is given the discrete
topology. Via Pontryagin duality, the category of profinite abelian groups is anti-equivalent to the category of discrete and torsion abelian groups. Both are abelian categories. Further, if $0\to A\to B\to C\to 0$ is an exact sequence of discrete and torsion abelian groups, then the dual sequence $0\to C^{D}\to B^{D}\to A^{D}\to 0$ is an exact sequence of profinite abelian groups.

\begin{lemma}\label{fincot} Let $\ell$ be a prime number and let $B$ be a discrete $\ell$-primary torsion abelian group. Then the following conditions are equivalent.
\begin{itemize}
\item[(i)] The profinite abelian group $B^{D}$ is a finitely generated $\Z_{\e\ell}$-module.
\item[(ii)] $B$ is isomorphic to $(\Q_{\le\ell}/\le\Z_{\le\ell})^{r}\oplus A$, where $r\geq 0$ is an integer and $A$ is a finite abelian $\ell$-group.
\item[(iii)] $B_{\e\ell}$ is a finite group.
\end{itemize}
\end{lemma}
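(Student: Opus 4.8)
The plan is to establish the cycle of implications (ii)$\,\Rightarrow\,$(iii)$\,\Rightarrow\,$(i)$\,\Rightarrow\,$(ii), transferring every statement about $B$ to the profinite $\Z_{\e\ell}$-module $B^{D}$ by means of the Pontryagin duality recalled above. The basic input is that, since $B$ is $\ell$-primary, $B^{D}=\Hom_{\le\text{cont.}}(B,\Q_{\le\ell}/\Z_{\le\ell})$ is a pro-$\ell$ group, hence a profinite $\Z_{\e\ell}$-module, and that the duality functor is exact and contravariant. Splitting the four-term exact sequence $0\to B_{\ell}\to B\xrightarrow{\,\ell\,} B\to B/\ell\to 0$ (whose middle arrow is multiplication by $\ell$) into two short exact sequences and dualizing each yields $0\to (B/\ell\e)^{D}\to B^{D}\xrightarrow{\,\ell\,} B^{D}\to (B_{\ell})^{D}\to 0$, and therefore the key identification $B^{D}\!/\ell\, B^{D}\cong (B_{\ell})^{D}$.

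Granting this, (ii)$\,\Rightarrow\,$(iii) is immediate: if $B\cong(\Q_{\le\ell}/\Z_{\le\ell})^{r}\oplus A$ with $A$ finite, then $B_{\ell}\cong(\Z/\ell\e)^{r}\oplus A_{\ell}$ is finite. For (iii)$\,\Rightarrow\,$(i), finiteness of $B_{\ell}$ makes $(B_{\ell})^{D}$ finite, so the identification above shows that $B^{D}\!/\ell\, B^{D}$ is finite; the topological Nakayama lemma for the profinite $\Z_{\e\ell}$-module $B^{D}$ then promotes this to the conclusion that $B^{D}$ is (topologically, hence abstractly) a finitely generated $\Z_{\e\ell}$-module. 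For (i)$\,\Rightarrow\,$(ii), the structure theorem for finitely generated modules over the principal ideal domain $\Z_{\e\ell}$ gives $B^{D}\cong\Z_{\e\ell}^{\,r}\oplus F$ with $F$ a finite abelian $\ell$-group; dualizing back and using $B\cong B^{DD}$, $(\Z_{\e\ell})^{D}\cong\Q_{\le\ell}/\Z_{\le\ell}$ and $F^{D}\cong F$ then gives $B\cong(\Q_{\le\ell}/\Z_{\le\ell})^{r}\oplus F$, which is (ii).

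The one step that requires genuine care is (iii)$\,\Rightarrow\,$(i). Here I must verify that multiplication by $\ell$ on $B^{D}$ is precisely the dual of multiplication by $\ell$ on $B$, so that the cokernel computation producing $B^{D}\!/\ell\,B^{D}\cong(B_{\ell})^{D}$ is legitimate, and that the topological Nakayama lemma indeed applies to the compact $\Z_{\e\ell}$-module $B^{D}$, namely that finiteness of $B^{D}\!/\ell\,B^{D}$ forces a continuous surjection $\Z_{\e\ell}^{\,n}\twoheadrightarrow B^{D}$ for some $n$. Everything else reduces to bookkeeping with Pontryagin duality and the elementary structure theory of finitely generated $\Z_{\e\ell}$-modules.
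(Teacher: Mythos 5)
Your proof is correct and follows essentially the same route as the paper: the same cycle (ii)$\Rightarrow$(iii)$\Rightarrow$(i)$\Rightarrow$(ii), the same key identification $B^{D}\!/\ell\cong(B_{\ell})^{D}$, and the same use of $\Z_{\ell}^{D}\cong\Q_{\ell}/\Z_{\ell}$ together with the structure theorem for finitely generated $\Z_{\ell}$-modules. The only difference is one of explicitness: you spell out the dualized four-term sequence and the appeal to topological Nakayama, which the paper leaves implicit.
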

\begin{proof} Assertion (ii) follows from (i) and duality by noting that $\Z_{\e\ell}^{D}=\Q_{\le\ell}/\le\Z_{\le\ell}$. The implication (ii) $\!\!\implies\!\!$ (iii) is trivial. Finally, if (iii) holds, then $B^{D}\!/\ell=(B_{\ell})^{D}$ is finite and (i) holds.
\end{proof}

If $\ell$ is a prime number, a discrete $\ell$-primary torsion abelian group $B$ which satisfies the equivalent conditions of the previous lemma is said to be {\it of finite cotype}. If $B$ as above is of finite cotype, then every subgroup and every quotient of $B$ is a group of finite cotype.

\begin{lemma}\label{es2} Let $B$ be a discrete and torsion abelian group such that $B(\le\ell\e)$ is a group of finite cotype for every prime number $\ell$. Then every subgroup and every quotient of $B$ of finite exponent is, in fact, finite.
\end{lemma}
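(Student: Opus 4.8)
The plan is to reduce both assertions to the single claim that the groups $B_{n}$ and $B/n$ are finite for every positive integer $n$. This reduction is immediate: a subgroup $S\subseteq B$ of exponent dividing $n$ is contained in $B_{n}$, while a quotient $B/C$ of exponent dividing $n$ satisfies $nB\subseteq C$ and is therefore a quotient of $B/n$. Thus finiteness of $B_{n}$ and $B/n$ yields the lemma at once.

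Next I would use the primary decomposition $B=\bigoplus_{\ell}B(\ell)$, valid because $B$ is torsion, which gives $B_{n}=\bigoplus_{\ell}B(\ell)_{n}$ and $B/n=\bigoplus_{\ell}\big(B(\ell)/n\big)$. For a prime $\ell\nmid n$, multiplication by $n$ is an automorphism of the $\ell$-primary group $B(\ell)$, so both corresponding summands vanish; only the finitely many primes $\ell\mid n$ survive, and for each of these it suffices to bound the summand with $n$ replaced by $\ell^{e}$, where $e=v_{\ell}(n)$.

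For each such $\ell$ the hypothesis says $B(\ell)$ is of finite cotype, so Lemma~\ref{fincot}(ii) lets me write $B(\ell)\cong(\Q_{\ell}/\Z_{\ell})^{r}\oplus A$ with $r\geq0$ and $A$ finite. Then $B(\ell)_{\ell^{e}}\cong(\Z/\ell^{e})^{r}\oplus A_{\ell^{e}}$ is finite, while, since $\Q_{\ell}/\Z_{\ell}$ is divisible, $B(\ell)/\ell^{e}\cong A/\ell^{e}$ is finite as well. Summing over the finitely many $\ell\mid n$ then proves that $B_{n}$ and $B/n$ are finite, and the lemma follows. The one point deserving care---and the only place the hypothesis of \emph{finite exponent} enters---is that the divisible part $(\Q_{\ell}/\Z_{\ell})^{r}$ is an infinite subgroup and quotient of $B(\ell)$: passing to $\ell^{e}$-torsion replaces it by the finite group $(\Z/\ell^{e})^{r}$, and passing to the quotient $B(\ell)/\ell^{e}$ annihilates it altogether by divisibility. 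Everything else is routine bookkeeping with the primary decomposition.
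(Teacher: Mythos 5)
Your proof is correct. The two arguments coincide on the subgroup half: both place a subgroup of exponent dividing $n$ inside $B_{n}$, whose finiteness comes from the primary decomposition (only the finitely many $\ell \mid n$ contribute, and each $B(\ell)_{n}=B(\ell)_{\ell^{e}}$ is finite by the finite-cotype hypothesis). The genuine difference is in the quotient half. The paper dualizes: if $nC=0$ for a quotient $C$ of $B$, then $C^{D}$ embeds into $(B^{D})_{n}=\prod_{\ell\mid n}\bigl(B(\ell)^{D}\bigr)_{n}$, which is finite because each $B(\ell)^{D}$ is a finitely generated $\Z_{\ell}$-module (condition (i) of Lemma \ref{fincot}), and then Pontryagin duality converts finiteness of $C^{D}$ into finiteness of $C$. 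You instead stay entirely on the discrete side and use the structure decomposition $B(\ell)\cong(\Q_{\ell}/\Z_{\ell})^{r}\oplus A$ (condition (ii)): passing to $B(\ell)/\ell^{e}$ annihilates the divisible part, so $B/n$ is finite, and every finite-exponent quotient of $B$ is a quotient of some $B/n$. Your route is more elementary and makes transparent exactly why divisibility saves the quotient case, at the cost of some bookkeeping (checking that torsion and quotients commute with the primary decomposition); the paper's route is shorter because the anti-equivalence between discrete torsion groups and profinite groups, already set up in its Preliminaries, absorbs that bookkeeping into the duality formalism.
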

\begin{proof} If $A$ is a subgroup of $B$ and $n$ is an integer such that $n\le A=0$, then $A\subset B_{n}$ is finite since $B_{n}$ is finite. If $C$ is a quotient of $B$ such that $n\le C=0$, then $C^{D}\subset (B^{D})_{n}=\prod_{\,\ell\le|n}(B(\le\ell\e)^{D})_{n}$, which is finite since each group $B(\le\ell\e)^{D}$ is a finitely generated $\Z_{\e\ell}$-module.
This proves the lemma.
\end{proof}

\begin{proposition}\label{ker-cok} Let $A\overset{\!\be f}\to B\overset{\!g}\to C$ be a pair of morphisms in an abelian category $\mathcal A$. Then there exists an induced exact sequence in $\mathcal A$ 
\[
0\to\krn f\to\krn(\e g\be\circ\! f\e)\to\krn g\to\cok f\to\cok\le(\e g\be\circ\! f\e)\to\cok g\to 0.
\]
\end{proposition}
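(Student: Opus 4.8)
The plan is to realize the six-term sequence as the splice of two pieces: a short, elementary left-hand segment coming directly from the universal property of kernels, and a right-hand segment produced by a single application of the snake lemma to a suitable auxiliary diagram.

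First I would record the elementary left-hand segment. Since $g\circ f$ annihilates every morphism that $f$ annihilates, the subobject $\krn f$ of $A$ factors through $\krn(g\circ f)$, which yields a monomorphism $\krn f\to\krn(g\circ f)$; and because $g\circ f$ kills the canonical map $\krn(g\circ f)\to A$, post-composition with $f$ lands in $\krn g$, giving a morphism $\krn(g\circ f)\to\krn g$. A direct computation of the kernel of this last morphism (using that $\krn g\to B$ is monic, so that kernel equals the kernel of $\krn(g\circ f)\to A\overset{f}{\to}B$, i.e.\ the intersection $\krn f\cap\krn(g\circ f)=\krn f$) shows that
\[
0\to\krn f\to\krn(g\circ f)\to\krn g
\]
is exact at $\krn f$ and at $\krn(g\circ f)$. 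These verifications use only universal properties of kernels and pullbacks of subobjects, so they are valid in any abelian category.

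Next I would obtain the remainder of the sequence from the snake lemma applied to the commutative diagram with exact rows whose top row is $A\overset{f}{\to}B\to\cok f\to0$, whose bottom row is $0\to C\overset{\mathrm{id}}{\to}C\to0$, and whose three vertical maps are $g\circ f$, $g$, and the zero map $\cok f\to0$. The right-hand vertical map identifies $\krn(\cok f\to0)=\cok f$ and $\cok(\cok f\to0)=0$, so the snake lemma yields an exact sequence
\[
\krn(g\circ f)\to\krn g\to\cok f\to\cok(g\circ f)\to\cok g\to0.
\]
Tracing the induced maps shows that $\krn g\to\cok f$ is the composite $\krn g\hookrightarrow B\twoheadrightarrow\cok f$, that $\cok f\to\cok(g\circ f)$ is induced by $g$ (this is the snake connecting homomorphism), and that $\cok(g\circ f)\to\cok g$ is induced by $\mathrm{id}_{C}$; in particular the incoming map $\krn(g\circ f)\to\krn g$ is exactly the $f$-induced morphism appearing in the first segment, and the terminal $\to0$ records that $\cok(g\circ f)\to\cok g$ is epic.

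Finally I would splice the two segments at $\krn g$. The snake lemma already guarantees exactness at $\krn g,\cok f,\cok(g\circ f)$ and $\cok g$, while the first step supplies monicity of $\krn f\to\krn(g\circ f)$ and exactness at $\krn(g\circ f)$; since the two descriptions of the map $\krn(g\circ f)\to\krn g$ coincide, concatenation produces the asserted exact sequence. I expect the only genuine subtlety to be organizational rather than computational: a single naive diagram with vertical maps $f,\,g\circ f,\,g$ fails to have exact rows (this reflects the asymmetry that $f$ need not be monic and $g$ need not be epic), so the content lies in selecting the auxiliary diagram above and in checking that its connecting data splice compatibly with the elementary left-hand segment. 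If one prefers a perfectly uniform treatment, the left-hand segment can instead be produced by running the same snake argument in the opposite category $\mathcal A^{\,\mathrm{op}}$ on $g^{\mathrm{op}}$ and $f^{\mathrm{op}}$, at the cost of verifying that the connecting morphisms match under dualization.
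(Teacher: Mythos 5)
Your proof is correct. Note, however, that the paper itself does not prove this proposition at all: it simply cites Brinkmann--Puppe \cite[Hilfssatz 5.5.2, p.~45]{bp}, so your self-contained derivation is a genuinely different route (and the classical one for this ``kernel--cokernel sequence''). Your decomposition works: the left-hand segment $0\to\krn f\to\krn(g\circ f)\to\krn g$ follows from universal properties exactly as you say, and your auxiliary diagram --- top row $A\to B\to\cok f\to 0$, bottom row $0\to C\xrightarrow{\mathrm{id}}C\to 0$, vertical maps $g\circ f$, $g$, $0$ --- is commutative with exact rows, so the snake lemma yields exactness at $\krn g$, $\cok f$, $\cok(g\circ f)$ and $\cok g$, with the incoming map $\krn(g\circ f)\to\krn g$ induced by the top horizontal map $f$, hence matching the left segment; the splice is therefore legitimate. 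What your approach buys is transparency and independence from the German reference, at the cost of two things the citation hides: (i) you must invoke the snake lemma \emph{in an arbitrary abelian category}, including the identification of all five maps, which requires either generalized (pseudo-)elements or a Freyd--Mitchell reduction --- your element-style phrases such as ``lift to $B$, apply $g$'' are shorthand for this and should be flagged as such; (ii) the observation that the naive three-column diagram with vertical maps $f$, $g\circ f$, $g$ has non-exact rows, which you correctly identify as the reason an auxiliary diagram is needed. Your optional remark about running the dual argument in $\mathcal A^{\mathrm{op}}$ is sound but unnecessary, since your direct verification of the left segment is already element-free.
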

\begin{proof} See, for example, \cite[Hilfssatz 5.5.2, p.~45]{bp}.
\end{proof}

\begin{proposition}\label{wk} If $i>0$, $\g$ is a finite group and $A$ is a $\g$-module, then $H^{\le i}(\g,A)$ is annihilated by $|\g\e|$. If, in addition, $A$ is finitely generated, then $H^{\le i}(\g,A)$ is finite.
\end{proposition}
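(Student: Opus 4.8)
The plan is to establish the two assertions separately, using only standard facts about the cohomology of finite groups. For the annihilation statement I would apply the restriction--corestriction formalism to the trivial subgroup of $\g$. Recall that for any subgroup $H\subseteq\g$ there are homomorphisms $\res\colon H^{\le i}(\g,A)\to H^{\le i}(H,A)$ and $\mathrm{cor}\colon H^{\le i}(H,A)\to H^{\le i}(\g,A)$ whose composite $\mathrm{cor}\circ\res$ equals multiplication by the index $[\e\g\colon\! H\,]$. Taking $H=\{1\}$ yields $[\e\g\colon\! H\,]=|\g\e|$, and since $H^{\le i}(\{1\},A)=0$ for every $i>0$, the composite $\mathrm{cor}\circ\res$ is the zero map. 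Hence multiplication by $|\g\e|$ annihilates $H^{\le i}(\g,A)$, which is the first claim.

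For the finiteness statement under the additional hypothesis that $A$ is finitely generated, I would compute $H^{\le i}(\g,A)$ from the standard inhomogeneous cochain complex, whose term in degree $n$ is the group of all maps $\g^{\le n}\to A$. Because $\g$ is finite, this term is isomorphic to a finite direct sum of copies of $A$ and is therefore a finitely generated abelian group. It follows that $H^{\le i}(\g,A)$, being a subquotient of a finitely generated abelian group, is itself finitely generated. Combining this with the first part, $H^{\le i}(\g,A)$ is a finitely generated abelian group of finite exponent (it is killed by $|\g\e|$), and any such group is finite.

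I do not expect a genuine obstacle here, as both assertions are classical; the only points requiring care are the existence and index-multiplication property of the corestriction map and the vanishing $H^{\le i}(\{1\},A)=0$ for $i>0$, all of which may simply be cited. If one prefers to avoid corestriction, the annihilation statement can instead be proved directly at the cochain level: representing a class by a cocycle and summing (``averaging'') that cocycle over $\g$ exhibits $|\g\e|$ times the class as a coboundary. The restriction--corestriction argument is, however, cleaner and I would favour it.
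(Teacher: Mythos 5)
Your proof is correct, and it is essentially the same argument as the paper's: the paper simply cites Serre, \emph{Local Fields}, VIII, \S2, Corollaries 1 and 2, where the annihilation statement is proved by exactly your restriction--corestriction argument and the finiteness statement by the same ``finitely generated of finite exponent, hence finite'' reasoning. Nothing is missing; both of your key inputs (the formula $\mathrm{cor}\circ\res = [\e\g\colon\! H\,]$ and the finite generation of the inhomogeneous cochain groups) are standard and correctly applied.
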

\begin{proof} This is well-known. See, for example, \cite[VIII, \S2, Corollaries 1 and 2, p.~130]{self}.
\end{proof}

\begin{lemma}\label{f-exp} Let $\g$ be a finite group. If $A\to B\to C$ is an exact sequence of $\g$-modules, then the homology of the induced complex of abelian groups $A^{\lbe\g}\to B^{\g}\to C^{\g}$ is annihilated by $|\g\e|$. 
\end{lemma}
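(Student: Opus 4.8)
The plan is to observe that the homology of a three-term complex sits entirely at the middle term $B^{\g}$, and to compute it directly using the left exactness of the invariants functor $(-)^{\g}=H^{0}(\g,-)$ together with the long exact sequence in group cohomology. First I would factor $f\colon A\to B$ through its image, writing $I=\img f$. Exactness of $A\to B\to C$ at $B$ gives $I=\krn g$ as a $\g$-submodule of $B$, so $f$ factors as a surjection $A\twoheadrightarrow I$ followed by the inclusion $I\hookrightarrow B$. In particular $g^{\g}\circ f^{\g}=0$, so we genuinely have a complex, and its only homology is $\krn(g^{\g})/\img(f^{\g})$.

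Next I would identify the two relevant subgroups of $B^{\g}$. Since $(-)^{\g}$ is left exact, it preserves kernels: an invariant element of $B$ killed by $g$ is exactly an invariant element of $\krn g=I$, whence $\krn(g^{\g})=I^{\g}$. Likewise $\img(f^{\g})$ is the image of the composite $A^{\g}\to I^{\g}\hookrightarrow B^{\g}$, hence coincides with the image of $A^{\g}\to I^{\g}$ computed inside $I^{\g}$. Consequently the homology of the complex $A^{\g}\to B^{\g}\to C^{\g}$ is identified with the cokernel $\cok[\e A^{\g}\to I^{\g}\e]$ of the map on invariants induced by the surjection $A\twoheadrightarrow I$.

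Finally I would apply $(-)^{\g}$ to the short exact sequence $0\to\krn f\to A\to I\to 0$. This yields a segment of the long exact cohomology sequence, $A^{\g}\to I^{\g}\to H^{1}(\g,\krn f)$, which exhibits $\cok[\e A^{\g}\to I^{\g}\e]$ as a subgroup of $H^{1}(\g,\krn f)$. By Proposition \ref{wk} the latter group is annihilated by $|\g|$, and a subgroup of a group annihilated by $|\g|$ is itself annihilated by $|\g|$; this gives the claim.

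The argument is essentially formal, so I do not expect a serious obstacle. The only point requiring care is the correct identification of $\krn(g^{\g})$ and $\img(f^{\g})$ after passing to invariants, which rests precisely on the left exactness of $(-)^{\g}$ and on the factorization of $f$ through $\krn g=\img f$. The single arithmetic input is Proposition \ref{wk}, applied to the first cohomology group of the $\g$-module $\krn f$.
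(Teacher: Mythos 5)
Your proof is correct and is essentially identical to the paper's: both factor $f$ through its image $I=\img f=\krn g$, identify the homology of $A^{\g}\to B^{\g}\to C^{\g}$ with $\cok[\e A^{\g}\to I^{\g}\e]$ via left exactness of $(-)^{\g}$, and embed that cokernel into $H^{1}(\g,\krn f)$ using the long exact sequence of $0\to\krn f\to A\to I\to 0$, concluding by Proposition \ref{wk}. No gaps; nothing further is needed.
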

\begin{proof} Let $f\colon A\to B$ and $g\colon B\to C$ be the given maps. The exact sequence of $\g$-modules $0\to\krn f\to A\to \img f\to 0$ yields an exact sequence of abelian groups 
\begin{equation}\label{usfl}
0\to (\krn f\le)^{\g}\to A^{\g}\overset{\!h}{\to}(\img f\e )^{\g}\to H^{1}(\g, \krn f\e)
\end{equation}
and $f^{\g}\be\colon\lbe A^{\g}\to B^{\g}$ factors as $A^{\g}\overset{\!h}{\to}(\img f\e )^{\g}\hookrightarrow  B^{\g}$. Thus there exist isomorphisms of abelian groups $\krn\!\left(f^{\g}\e\right)=(\krn f\le)^{\g}$  and $\img\!\left(f^{\g}\e\right)=\img\, h$. Similarly, $\krn\!\be\left(\le g^{\le \g}\e\right)=(\krn g\le)^{\g}=(\img f\e )^{\g}$. Thus the homology of the complex $A^{\g}\to B^{\g}\to C^{\g}$ equals
\[
\krn\!\be\left(\le g^{\le \g}\e\right)\be/\e\img\!\be\left(f^{\g}\e\right)=(\img f\e )^{\g}\be/\e\img\, h=\cok h.
\]
Since $\cok h$ injects into $H^{1}(\g, \krn f\e)$ by \eqref{usfl} and the latter group is annihilated by $|\g\e|$ by Proposition \ref{wk}, the lemma follows.
\end{proof}

We will write ${\bf Ab}$ for the category of abelian groups.

\begin{lemma}\label{wk} Let $n$ be a positive integer and let $0\to A\to B\to C\to 0$ be a short exact sequence in ${\bf Ab}$. Then there exists an induced exact sequence in ${\bf Ab}$
\[
0\to A_{n}\to B_{n}\to C_{n}\to A/n\to B/n\to C/n\to 0.
\]	
\end{lemma}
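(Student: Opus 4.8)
The plan is to realize the desired six-term sequence as the snake-lemma sequence attached to multiplication by $n$. First I would form the commutative diagram in ${\bf Ab}$ with exact rows
\[
\begin{CD}
0 @>>> A @>>> B @>>> C @>>> 0\\
@. @VV{n}V @VV{n}V @VV{n}V @.\\
0 @>>> A @>>> B @>>> C @>>> 0
\end{CD}
\]
in which both rows are the given short exact sequence and each vertical arrow is multiplication by $n$. For any abelian group $M$ the kernel of $n\colon M\to M$ is exactly $M_{n}$ and its cokernel is exactly $M/n$, so the three vertical maps have kernels $A_{n},B_{n},C_{n}$ and cokernels $A/n,B/n,C/n$, respectively.

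Next I would apply the snake lemma to this diagram. Since ${\bf Ab}$ is an abelian category, the snake lemma joins the kernels of the vertical maps to their cokernels by a connecting homomorphism $\delta\colon C_{n}\to A/n$ and produces the exact sequence
\[
0\to A_{n}\to B_{n}\to C_{n}\overset{\delta}{\to}A/n\to B/n\to C/n\to 0,
\]
which is precisely the asserted sequence; the maps $A_{n}\to B_{n}\to C_{n}$ and $A/n\to B/n\to C/n$ are those induced functorially by the original maps $A\to B\to C$.

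There is essentially no obstacle here: the statement is the snake lemma in the special case of the endomorphism ``multiplication by $n$''. The only point requiring care is the connecting map $\delta$, which sends $c\in C_{n}$ to the class modulo $n$ of the unique element $a\in A$ with $\iota(a)=n\tilde{c}$, where $\tilde{c}\in B$ is any lift of $c$ and $\iota\colon A\to B$ is the given injection; one checks in the standard way that $\delta$ is well defined and that the sequence is exact at $C_{n}$ and at $A/n$. Alternatively, the same sequence may be obtained by applying $\Hom(\Z/n,-)$ to the short exact sequence and using the identifications $\Hom(\Z/n,M)=M_{n}$ and $\mathrm{Ext}^{1}(\Z/n,M)=M/n$, together with the vanishing of $\mathrm{Ext}^{2}(\Z/n,-)$, which holds because $\Z/n$ admits the free resolution $0\to\Z\overset{n}{\to}\Z\to\Z/n\to 0$ and hence has projective dimension one over $\Z$.
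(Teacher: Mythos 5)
Your proof is correct, but it proceeds by a genuinely different route from the paper's. The paper disposes of the lemma in one line by applying the functor $\mathrm{Tor}_{*}^{\Z}(\Z/n,-)$ to the given short exact sequence and citing Weibel's Calculation 3.1.1, which supplies the identifications $\mathrm{Tor}_{0}^{\Z}(\Z/n,M)\cong M/n$ and $\mathrm{Tor}_{1}^{\Z}(\Z/n,M)\cong M_{n}$ together with the vanishing of $\mathrm{Tor}_{i}^{\Z}(\Z/n,M)$ for $i\geq 2$; the displayed six-term sequence is then exactly the Tor long exact sequence. Your main argument instead applies the snake lemma to multiplication by $n$ viewed as an endomorphism of the given short exact sequence; this is more elementary and self-contained (no derived functors, only the snake lemma in ${\bf Ab}$), and it has the mild advantage of exhibiting the connecting homomorphism $C_{n}\to A/n$ explicitly, which the paper's citation leaves implicit. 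Your alternative argument via $\Hom(\Z/n,-)$ and $\mathrm{Ext}^{1}(\Z/n,-)$ is the Hom-side mirror of the paper's Tor argument: both rest on the fact that $\Z/n$ has projective (equivalently, flat) dimension $1$ over $\Z$, via the resolution $0\to\Z\overset{n}{\to}\Z\to\Z/n\to 0$. All three routes produce the same sequence, with the maps $A_{n}\to B_{n}\to C_{n}$ and $A/n\to B/n\to C/n$ induced functorially by $A\to B\to C$, so the lemma as you prove it serves its later use in the paper (in the proof of finiteness of cotype for $\sha^{\e 2}$) without any change.
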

\begin{proof} This follows by applying the functor $\textrm{Tor}_{*}^{\Z}(\Z/\e n,-)$ to the given short exact sequence using \cite[Calculation 3.1.1, p.~66]{wei}.
\end{proof}

 Let $\g$ be a finite group and let $\s C$ be the quotient category of ${\bf Ab}$ modulo its thick  subcategory of groups of finite exponent annihilated by a power of $|\g\e|$. If $f\colon A\to B$ is a homomorphism of abelian groups, then $f$ is a monomorphism (respectively, epimorphism, isomorphism) in $\s C$ if, and only if, $\krn f$ (respectively, $\cok f$, both $\krn f$ and $\cok f\e$) are groups of finite exponent annihilated by a power of $|\g\e|$ \cite[III, \S1, Lemma 2, p.~366]{gab62}.

\smallskip

\begin{proposition}\label{ref} Let $\mathcal G$ be a profinite group and let $\mathcal N$ be an open and normal subgroup of $\mathcal G$ such that the quotient group $\g=\mathcal G/\e\mathcal N$ is finite. Then, for every continuous $\mathcal G$-module $C$ and every integer $i\geq 1$, the restriction map $\res^{i}\colon H^{\le i}(\mathcal G,C\e)\to H^{\le i}(\e\mathcal N,C\e)^{\g}$ is an isomorphism in $\s C$. In other words, the kernel and cokernel of $\res^{i}$ are groups of finite exponent annihilated by a power of $|\g\le|$.
\end{proposition}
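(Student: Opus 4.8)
The plan is to deduce the statement from the standard restriction--corestriction formalism in continuous (profinite) cohomology, which applies because $\mathcal N$ is an \emph{open} subgroup of $\mathcal G$ of finite index $|\g\e|$. For every $i\geq 1$ one then has a corestriction map $\mathrm{cor}^{\le i}\colon H^{\le i}(\e\mathcal N,C\e)\to H^{\le i}(\mathcal G,C\e)$, and the image of $\res^{\le i}$ is automatically contained in the $\g$-invariants $H^{\le i}(\e\mathcal N,C\e)^{\g}$, so that $\res^{\le i}$ is indeed the map displayed in the statement. First I would record the two Mackey-type identities available here: the composite $\mathrm{cor}^{\le i}\be\circ\lbe\res^{\le i}$ equals multiplication by $|\g\e|$ on $H^{\le i}(\mathcal G,C\e)$, and, crucially using that $\mathcal N$ is \emph{normal}, the composite $\res^{\le i}\be\circ\lbe\mathrm{cor}^{\le i}$ equals $\sum_{\gamma\in\g}\gamma^{*}$ on $H^{\le i}(\e\mathcal N,C\e)$, where $\gamma^{*}$ denotes the natural action of $\g=\mathcal G/\e\mathcal N$.

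For the kernel, the first identity suffices at once: if $x\in\krn(\res^{\le i}\e)$, then $|\g\e|\,x=\mathrm{cor}^{\le i}(\res^{\le i}(x))=0$, so $\krn(\res^{\le i}\e)$ is annihilated by $|\g\e|$. For the cokernel I would combine the second identity with the fact that the target consists of $\g$-invariant classes. Given $y\in H^{\le i}(\e\mathcal N,C\e)^{\g}$, one has $\res^{\le i}(\mathrm{cor}^{\le i}(y))=\sum_{\gamma\in\g}\gamma^{*}y=|\g\e|\,y$, the last equality because $\gamma^{*}y=y$ for every $\gamma$. Thus $|\g\e|\,y\in\img(\res^{\le i}\e)$ for every such $y$, so $\cok(\res^{\le i}\e)$ is annihilated by $|\g\e|$ as well.

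In both cases the group in question is therefore of finite exponent and annihilated by $|\g\e|$, hence \emph{a fortiori} by a power of $|\g\e|$, so it is an object of the thick subcategory defining $\s C$. By the criterion recalled immediately before the statement, this means precisely that $\res^{\le i}$ is an isomorphism in $\s C$, as asserted. The only point genuinely requiring care is the normal-subgroup Mackey identity $\res^{\le i}\be\circ\lbe\mathrm{cor}^{\le i}=\sum_{\gamma\in\g}\gamma^{*}$: it is standard, but must be invoked in the form valid for open subgroups of finite index in a profinite group acting on a continuous module, and it is what carries the cokernel bound.

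An alternative route, avoiding corestriction altogether, is to realise $\res^{\le i}$ as the edge morphism $H^{\le i}(\mathcal G,C\e)\twoheadrightarrow E_{\infty}^{\e 0,i}\hookrightarrow E_{2}^{\e 0,i}=H^{\le i}(\e\mathcal N,C\e)^{\g}$ of the Hochschild--Serre spectral sequence $E_{2}^{\e p,q}=H^{\le p}(\g,H^{\le q}(\e\mathcal N,C\e))\Rightarrow H^{\le p+q}(\mathcal G,C\e)$. Then $\krn(\res^{\le i}\e)$ is the first step of the filtration on $H^{\le i}(\mathcal G,C\e)$ and $\cok(\res^{\le i}\e)=E_{2}^{\e 0,i}/E_{\infty}^{\e 0,i}$, and each is built by a finite filtration out of subquotients of the terms $E_{2}^{\e p,q}$ with $p\geq 1$; since such a term is annihilated by $|\g\e|$ (the higher cohomology of the finite group $\g$ being $|\g\e|$-torsion, Proposition \ref{wk}), both $\krn$ and $\cok$ are annihilated by a power of $|\g\e|$, giving the conclusion again.
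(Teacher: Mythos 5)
Your proposal is correct, but your primary argument takes a genuinely different route from the paper. The paper does not use corestriction at all: it localizes the entire Hochschild--Serre formalism at the thick subcategory defining $\s C$. Concretely, it views Hochschild--Serre as the Grothendieck spectral sequence of the composite functor $(-)^{\mathcal N}$ followed by $(-)^{\g}$, passes to the quotient categories via the exact localization functors, invokes Lemma \ref{f-exp} to conclude that $(-)^{\g}\colon\s C_{\g}\to\s C$ is \emph{exact}, so the localized spectral sequence degenerates at $E_{2}$, and then identifies the resulting isomorphism $H^{\le i}(\mathcal G,C\le)\simeq H^{\e i}(\e\mathcal N,C\le)^{\g}$ in $\s C$ with the edge morphism, i.e.\ with $\res^{i}$. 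Your \emph{alternative} route (filtration of $\krn(\res^{i})$ and of $E_{2}^{\e 0,i}/E_{\infty}^{\e 0,i}$ by subquotients of $E_{2}^{\e p,q}$, $p\geq 1$, each killed by $|\g\e|$) is essentially the concrete, unwound version of the paper's categorical argument, and is also correct. As for what each approach buys: your restriction--corestriction argument is more elementary and gives a sharper conclusion --- both $\krn(\res^{i})$ and $\cok(\res^{i})$ are annihilated by $|\g\e|$ itself, not merely by a power of it --- with the one genuinely delicate input being the identity $\res^{i}\circ\mathrm{cor}^{i}=\sum_{\gamma\in\g}\gamma^{*}$, which, as you rightly stress, requires $\mathcal N$ to be open \emph{and normal} and is valid in continuous cohomology of profinite groups; the paper's argument, by contrast, is the one that motivates and exercises the quotient-category machinery ($\s C$, Lemma \ref{f-exp}, Gabriel's criterion) that is reused immediately afterwards (e.g.\ the five-lemma argument in $\s C$ in Lemma \ref{hi0}), so it integrates better with the rest of the paper even though it only yields annihilation by a power of $|\g\e|$. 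It is worth noting that the paper does use a restriction--corestriction argument of the same flavor as yours elsewhere, namely in Lemma \ref{hi} via Weil restriction of the $1$-motive, so your method is very much in the spirit of the paper, just deployed at a different point.
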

\begin{proof} Denote by ${\bf Ab}_{\le\mathcal G}$ the (abelian) category of abelian groups equipped with a continuous $\mathcal G$-action and equivariant maps, and define ${\bf Ab}_{\lbe\g}$ similarly. Write $\s C_{\mathcal G}$ for the localisation of the abelian category ${\bf Ab}_{\le\mathcal G}$ with respect to the thick subcategory of modules of finite exponent annihilated by a power of $|\g\le|$ and let $\s C_{\g}$ be the corresponding object associated to ${\bf Ab}_{\g}$. Consider the following commutative diagram of categories and functors:
\[
\xymatrix{{\bf Ab}_{\le\mathcal G}\ar[d]\ar[r]^{(-)^{\le\mathcal N}}& {\bf Ab}_{\g}\ar[d]\ar[r]^{(-)^{\g}}& {\bf Ab}\ar[d]\\
\s C_{\mathcal G}\ar[r]^{(-)^{\le\mathcal N}}& \s C_{\g}\ar[r]^{(-)^{\g}}& \s C.}
\]
The vertical arrows are the canonical localisation functors, which are exact. The horizontal compositions are the fixed point functors $(-)^{\mathcal G}$. The classical Hochschild-Serre spectral sequence is the Grothendieck spectral sequence of the composite functor ${\bf Ab}_{\le\mathcal G}\to{\bf Ab}_{\g}\to {\bf Ab}$.
Since the vertical functors are exact, this spectral sequence maps to the Grothendieck spectral sequence of the composite $\s C_{\mathcal G}\to \s C_{\g}\to \s C$. Now, since the functor $(-)^{\g}\colon \s C_{\g}\to\s C$ is exact by Lemma \ref{f-exp}\,, the localised spectral sequence degenerates at the
second stage and we obtain isomorphisms in $\s C$
\[
H^{\le i}(\mathcal G,C\le)\simeq H^{\e i}(\e\mathcal N,C\le)^{\g}.
\]
The preceding morphism is represented in ${\bf Ab}$ by the edge morphism $E^{\e i}\to
E_{2}^{\e 0, i}$, which coincides with the restriction map $\res^{i}$. The proposition is now clear.
\end{proof}

\section{1-motives over a field}\label{1-mot}

Let $F$ be a field and let $\fs$ be a fixed separable algebraic closure of $F$. For any subextension $L/F$ of $\fs\be/F$, we will write $\g_{\be L}$ for $\textrm{Gal}(\e\fs/L\e)$. A (Deligne) 1-motive over $F$ is a complex in degrees $-1$ and 0 of commutative $F$-group schemes $M=[\e Y\be\overset{\!\be u}\to G\,]$, where $Y$ is \'etale-locally isomorphic to $\Z^{r}$ for some integer $r\geq 0$, $G$ is a
semiabelian variety over $F$ given as an extension of an abelian variety $A$ by a torus $T$
\begin{equation}\label{tga}
0\to T\to G\to A\to 0
\end{equation}
and $u$ is a morphism of $F$-group schemes. We will identify the 1-motive $M=[\e 0\to G\,]$ with $G$ placed in degree 0. The exact sequence of complexes
\begin{equation}\label{tga2}
0\to [\e 0\to G\,]\overset{\! \iota}\to[\e Y\to G\,]\to [\e Y\to 0\,]\to 0
\end{equation}
induces the following distinguished triangle in the derived category of the category of bounded complexes of abelian sheaves on the small fppf site over $\spec F$:
\begin{equation}\label{ygm}
Y\to G\overset{\! \iota}\to M\to Y[1].
\end{equation}
We will write $M^{\le *}$ for the 1-motive dual to $M$. Thus
\[
M^{\le *}=[\e Y^{ *}\!\!\overset{u^{\be *}}{\lra} G^{\e *}\e],
\]
where $Y^{ *}$ is the group of characters of $T$ and $G^{\e *}$ is an extension of the abelian variety $A^{\le *}$ dual to $A$ by an $F$-torus $T^{\e *}$ whose group of characters is $Y$. The dual 1-motive comes equipped with a duality pairing, i.e., a morphism in the derived category of fppf sheaves $b\colon M\!\otimes^{\mathbf L}\!M^{\le *}\!\to\bg_{m,F}[1]$ which induces isomorphisms $M\overset{\!\sim}{\to} \mathbf R\Hom(M^{\le *}, \bg_{m,F}[1])$ and $M^{\le *}\overset{\!\sim}{\to}\mathbf R\Hom(M,\bg_{m,F}[1])$. The pairing $b$ induces pairings of fppf cohomology groups
\begin{equation}\label{pings}
H^{\e i}(F, M)\times H^{\e j}(F, M^{\le*})\overset{\!\cup}{\to}H^{\e i+j}(F, M\!\otimes^{\mathbf L}\!M^{\le*})\overset{\!H^{i+j}\be(b)}{\lra}H^{\e i+j+1}(F,\bg_{m}).
\end{equation}
The above constructions can be extended to any base scheme (in place of a field) and, in particular, the notions of a 1-motive over a Dedekind scheme and its dual are defined. Such objects will briefly appear in Remark \ref{spy}(b) and in the proof of Lemma \ref{fcot}. See \cite[\S1]{hs1} and \cite[\S3]{ga09} for more details.

Note that, by \eqref{ygm},
\begin{equation}\label{h-1}
H^{\e -1}(K,M\le)=\krn[\e H^{\le 0}(K, Y\le)\to H^{\le 0}(K, G\le)\e],
\end{equation}
which is a finitely generated abelian group.

\smallskip

Now let $K/F$ be a finite Galois subextension of $\fs\be/F$ with Galois group $\g$.
For every integer $i\geq -1$, set
\begin{equation}\label{hlg}
H^{\e i}(K/F, M\e)=\krn\!\be\left[\e H^{\e i}(F, M\le)
\overset{\!\res^{\lbe i}}{\lra} H^{\e i}(K, M\le)^{\g}\,\right]
\end{equation}
and
\begin{equation}\label{hlgc}
\quad\,\,{\rm C}^{\e i}\lbe(K/F, M\e)=\cok\!\be\left[\e H^{\e i}(F, M\le)\overset{\!\res^{\lbe i}}{\lra} H^{\e i}(K, M\le)^{\g}\,\right]\lbe.
\end{equation}

\begin{lemma}\label{hi0} For every integer $i\geq 1$, $C^{\e i}(K/F, M\e)$ \eqref{hlgc} is a group of finite exponent annihilated by a power of $[\e K\lbe\colon \be F\,]$.
\end{lemma}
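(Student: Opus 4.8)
The plan is to reduce the assertion for the complex $M$ to the corresponding statements for the $F$-group schemes $Y$ and $G$, to which Proposition \ref{ref} applies directly, and then to transport the conclusion across the distinguished triangle \eqref{ygm} by means of the five lemma in the abelian quotient category $\s C$. Identifying, as the paper allows, the fppf cohomology of $Y$ and $G$ with the Galois cohomology of the discrete $\g_{\be F}$-modules $Y(\fs)$ and $G(\fs)$, Proposition \ref{ref} applied with $\mathcal G=\g_{\be F}$ and $\mathcal N=\g_{\be K}$ shows that for every integer $j\geq 1$ the restriction maps
\[
\res^{\le j}\colon H^{\le j}(F,Y\e)\to H^{\le j}(K,Y\e)^{\g}\qquad\text{and}\qquad \res^{\le j}\colon H^{\le j}(F,G\e)\to H^{\le j}(K,G\e)^{\g}
\]
are isomorphisms in $\s C$. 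These are the four flanking isomorphisms that will drive the five lemma.

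First I would write down the two long exact cohomology sequences attached to \eqref{ygm}, one over $F$ and one over $K$, related by restriction. The $F$-sequence is a genuine long exact sequence in ${\bf Ab}$, hence exact in $\s C$. The $K$-sequence is a long exact sequence in ${\bf Ab}_{\g}$; regarded in $\s C_{\g}$ it is still exact, and applying the functor $(-)^{\g}\colon\s C_{\g}\to\s C$ — which is exact by Lemma \ref{f-exp}, exactly the fact exploited in the proof of Proposition \ref{ref} — yields a long exact sequence in $\s C$ with successive terms $H^{\le j}(K,Y\e)^{\g}$, $H^{\le j}(K,G\e)^{\g}$, $H^{\le j}(K,M\e)^{\g}$. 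Since the restriction maps take values in the $\g$-invariants and commute with the connecting homomorphisms, the two sequences fit into a commutative ladder of exact sequences in $\s C$.

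Now fix $i\geq 1$ and examine the five consecutive terms of the top row centred at $H^{\le i}(F,M\e)$, namely
\[
H^{\le i}(F,Y\e)\to H^{\le i}(F,G\e)\to H^{\le i}(F,M\e)\to H^{\le i+1}(F,Y\e)\to H^{\le i+1}(F,G\e),
\]
together with the corresponding $\g$-invariants row over $K$. Because $i\geq 1$, all four restriction maps surrounding $\res^{\le i}\colon H^{\le i}(F,M\e)\to H^{\le i}(K,M\e)^{\g}$ are isomorphisms in $\s C$ by the reduction above, so the five lemma in the abelian category $\s C$ forces $\res^{\le i}$ to be an isomorphism in $\s C$ as well. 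By the characterization of isomorphisms in $\s C$ recalled just before Proposition \ref{ref}, both $\krn(\res^{\le i}\e)$ and $\cok(\res^{\le i}\e)=C^{\e i}(K/F,M\e)$ are then of finite exponent annihilated by a power of $[\e K\lbe\colon\! F\,]$, which is the claim (and, incidentally, also yields the kernel).

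The one genuine subtlety — and the step I expect to demand the most care — is the exactness of the bottom row: the fixed-point functor $(-)^{\g}$ is not exact on ${\bf Ab}$, so the $\g$-invariants of the $K$-sequence form a priori only a complex, whose homology at each spot is killed by $|\g\e|$ by Lemma \ref{f-exp} and therefore vanishes only after passage to $\s C$. This is precisely why the whole argument must be run inside $\s C$ rather than in ${\bf Ab}$, and it is also the reason the resulting annihilator of $C^{\e i}(K/F,M\e)$ is merely a power of $[\e K\lbe\colon\! F\,]$ rather than $[\e K\lbe\colon\! F\,]$ itself.
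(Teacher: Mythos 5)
Your proposal is correct and follows essentially the same route as the paper's own proof: both pass to the quotient category $\s C$, use Lemma \ref{f-exp} to make the $\g$-invariants of the long exact sequence of \eqref{ygm} over $K$ exact in $\s C$, invoke Proposition \ref{ref} for the four flanking restriction maps on the cohomology of $Y$ and $G$ (all in degrees $\geq 1$ since $i\geq 1$), and conclude with the five lemma in $\s C$. The subtlety you flag about $(-)^{\g}$ failing to be exact on ${\bf Ab}$ is precisely what the paper's appeal to Lemma \ref{f-exp} addresses.
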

\begin{proof} We work in the category $\s C$ of abelian groups modulo groups of finite exponent annihilated by a power of $|\g\e|$. By Lemma \ref{f-exp}, the triangle \eqref{ygm} induces a 5-column exact and commutative diagram in $\s C$
\[
\hspace{-4.1cm}\xymatrix{H^{\e i-1}(F, Y[1]\e)\ar[d]\ar[r]& H^{\le i}(F, G\e)\ar[d]\ar[r]& H^{\le i}(F,M\e)\ar[d]\dots\\
	H^{\e i-1}(K, Y[1]\e)^{\g}\ar[r]& H^{\le i}(K, G\e)^{\g}\ar[r]& H^{\le i}(K,M\e)^{\g}\dots}
\]
\[
\xymatrix{&&&&&\ar[r]& H^{\le i}(F, Y[1]\e)\ar[d]\ar[r]& H^{\le i+1}(F, G\e)\ar[d]\\
&&&&&\ar[r]& H^{\le i}(K, Y[1]\e)^{\g}\ar[r]& H^{\le i+1}(K, G\e)^{\g}.}
\]
By Proposition \ref{ref}, the first, second, fourth and fifth vertical maps above are isomorphisms in $\s C$. Now the five-lemma completes the proof.
\end{proof}

In the case of \eqref{hlg}, the following more precise result holds.

\begin{lemma}\label{hi} For every integer $i\geq -1$, $H^{\e i}(K/F, M\e)$ \eqref{hlg} is a group of finite exponent annihilated by $[\e K\lbe\colon \be F\,]$.
\end{lemma}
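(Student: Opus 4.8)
The plan is to prove the assertion by a restriction--corestriction argument carried out \emph{directly} on the hypercohomology of the complex $M$, since this yields the sharp bound claimed here: annihilation by $[K:F]$ itself rather than by a mere power of it. As recalled in the Introduction, because $Y$ and $G$ are smooth and commutative, the fppf groups $H^{\e i}(F,M)$ may be identified with the Galois hypercohomology groups $H^{\e i}(\g_F, M(\fs))$ of the bounded complex of discrete $\g_F$-modules $M(\fs)=[\,Y(\fs)\to G(\fs)\,]$. Under this identification $\g_K=\mathrm{Gal}(\fs/K)$ is an open subgroup of $\g_F$ of index $[K:F]=|\g|$, and the map $\res^{\le i}$ of \eqref{hlg} is the restriction map $H^{\e i}(\g_F,M(\fs))\to H^{\e i}(\g_K,M(\fs))$ followed by the automatic inclusion of its image into the $\g$-invariants. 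Consequently $H^{\e i}(K/F,M)=\krn\be[\,H^{\e i}(\g_F,M(\fs))\to H^{\e i}(\g_K,M(\fs))\,]$.

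The key step is to produce, for every $i$, a corestriction (transfer) homomorphism $\mathrm{cor}^{\le i}\colon H^{\e i}(\g_K,M(\fs))\to H^{\e i}(\g_F,M(\fs))$ and to verify the transfer identity $\mathrm{cor}^{\le i}\circ\res^{\le i}=[K:F]\cdot\mathrm{id}$ on $H^{\e i}(\g_F,M(\fs))$. Granting this, the lemma is immediate: if $x\in H^{\e i}(K/F,M)$ then $\res^{\le i}(x)=0$, whence $[K:F]\cdot x=\mathrm{cor}^{\le i}(\res^{\le i}(x))=0$. This disposes of all $i\geq -1$ uniformly. (For $i=-1$ one checks directly from \eqref{h-1} that $\res^{-1}$ is the injection $\krn[\,Y(F)\to G(F)\,]\hookrightarrow\krn[\,Y(K)\to G(K)\,]$, so in fact $H^{-1}(K/F,M)=0$, consistent with the general statement.)

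The main obstacle is exactly the point anticipated in the Introduction: for a single module or algebraic group both the transfer and the identity $\mathrm{cor}\circ\res=[K:F]$ are classical, but here $M$ is only a complex. I would resolve this by realising both maps at the level of $\mathbf R\Gamma$: the finite \'etale morphism $p\colon\spec K\to\spec F$ of degree $[K:F]$ furnishes a trace morphism $p_{*}p^{*}M\to M$ whose composite with the unit $M\to p_{*}p^{*}M$ is multiplication by $[K:F]$, and applying $\mathbf R\Gamma(\spec F,-)$ (together with the identification $H^{\e i}(\spec F,p_{*}p^{*}M)=H^{\e i}(K,M)$ coming from the exactness of $p_{*}$) produces $\res^{\le i}$, $\mathrm{cor}^{\le i}$ and the transfer identity simultaneously in every degree. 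The crucial feature is that the trace is a morphism of complexes (equivalently, of objects of the derived category), hence compatible with the differential of $M$, which is precisely what is not available when one works only with the constituents of $M$. Indeed, a naive reduction to $Y$ and $G$ via the triangle \eqref{ygm} and a diagram chase annihilates $x$ only by $[K:F]^{2}$, so the transfer on $M$ itself is essential for the first-power bound asserted here.
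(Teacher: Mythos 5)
Your proposal is correct and is essentially the paper's own argument: the paper realizes your $p_{*}p^{*}M$ and its trace morphism concretely as the Weil restriction $R_{K/F}(M_{K})$ (again a 1-motive over $F$) together with the norm morphism $N\colon R_{K/F}(M_{K})\to M$ and the unit $j\colon M\to R_{K/F}(M_{K})$ satisfying $N\circ j=[\e K\colon F\e]$, and then invokes Shapiro's lemma (your exactness of $p_{*}$) to transport this transfer identity to hypercohomology in every degree $i\geq -1$. The difference is purely one of formalism (group schemes and Weil restriction versus sheaves and $p_{*}p^{*}$), and your closing observation---that the corestriction must be built on the complex $M$ itself rather than on its constituents $Y$ and $G$ to get annihilation by $[\e K\colon F\e]$ rather than a power of it---matches the paper's stated motivation exactly.
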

\begin{proof} We use a restriction-corestriction argument. The Weil restriction $R_{K/F}(\le Y_{\lbe K})$ is \'etale-locally isomorphic to $\Z^{\le r[\e K\lbe\colon \! F\,]}$. Further, $R_{K/F}(G_{\be K})$ is a semiabelian variety over $F$ by \cite[Corollary A.5.4(3), p.~508]{cgp}. The Weil restriction of $M_{K}$ relative to $K/F$ is the 1-motive over $F$
\[
R_{K\be/F}(M_{\lbe K})=[\e R_{K/F}(\le Y_{K})\overset{\!v}{\to} R_{K/F}(G_{\be K})\e],
\]
where $v=R_{K/F}(u_{K})$. Now, for every $F$-group scheme $H$, there exists a canonical norm (or corestriction) morphism $R_{K\be/F}(\le H_{\be K}\be)\to H$ defined as follows: if $N_{K\be/F}\colon K\to F$ is the norm map associated to the Galois extension $K/F$ and $X$ is any $F$-scheme, then $R_{K\be/F}(\le H_{\be K}\be)(X)\to H(X)$ is the homomorphism $H(X_{\be K})\to H(X)$ induced by $X\times_{\spec F}\spec\be(N_{K\be/F}\be)\colon\, X\to X_{\be K}$. We let $N\colon R_{K\be/F}(M_{\lbe K})\to M$ be the morphism of 1-motives whose components are the norm morphisms $R_{K/F}(\le Y_{K}\be)\to Y$ and $R_{K/F}(\le G_{\be K}\be)\to G$ described above. We now observe that there exists a canonical morphism of $F$-group schemes $Y\!\to R_{K/F}(\le Y_{K})$ which corresponds to the identity morphism of $Y_{\lbe K}$ under the isomorphism 
$\Hom_{F}(\e Y,R_{K/F}(Y_{K}))\simeq\Hom_{K}(\e Y_{\lbe K}, Y_{\lbe K})$ that characterizes the functor $R_{K/F}$. Similarly, there exists a canonical morphism of $F$-group schemes $G\to R_{K/F}(G_{\be K})$. These morphisms are the components of a canonical morphism of 1-motives $j\colon M\to R_{K/F}(M_{\lbe K})$. The composite morphism 
\[
M\overset{\!j}{\to} R_{K/F}(M_{\lbe K}\be)\overset{\!N}{\to} M
\]
is the multiplication-by-$[\e K\lbe\colon \be F\,]$ map on $M$. Now, by the definition of $R_{K\be/F}(M_{\lbe K})$, Shapiro's lemma in Galois cohomology and a standard five-lemma argument (see the proof of the previous lemma), for every integer $i\geq -1$ there exists a canonical isomorphism of Galois hypercohomology groups $e^{\le i}\colon H^{\e i}(F, R_{K/F}(M_{\lbe K})\le)\overset{\!\sim}{\to}H^{\e i}(K, M\le)$. We define the norm map $N^{\le i}\colon H^{\e i}(K, M\le)\to H^{\e i}(F, M\le)$ as the composition
\[
H^{\e i}(K, M\le)\overset{\! (e^{i})^{-1}}{\lra}H^{\e i}(F, R_{K/F}(M_{\lbe K})\le)\overset{H^{\le i}\be(N\le)}{\lra}
H^{\e i}(F, M\le).
\]
The restriction map $\res^{\le i}\colon H^{\e i}(F, M\le)\to H^{\e i}(K, M\le)$ is the composition
\[
H^{\e i}(F, M\le)\overset{\!H^{\le i}\be(\e j\lbe)}{\lra}H^{\e i}(F, R_{K/F}(M_{\lbe K})\le)\overset{\! e^{i}}{\lra}H^{\e i}(K, M\le)
\]
and the composite map
\[
H^{\e i}(F, M\le)\overset{\!\res^{\le i}}{\lra}H^{\e i}(K, M\le)\overset{\! N^{\le i}}{\lra}H^{\e i}(F, M\le)
\]
is the multiplication-by-$[\e K\lbe\colon \be F\,]$ map on $H^{\e i}(F, M\le)$. The lemma follows easily.
\end{proof}

We now assume that $F$ a global field, i.e., $F$ is either a finite extension of $\Q$ (the number field case) or is finitely generated and of transcendence degree 1 over a finite field of constants (the
function field case). We will write $p$ for the characteristic exponent of $F$. Thus $p=1$ if $F$ is a number field and $p={\rm char}\e F$ if $F$ is a function field.
If $v$ is a prime of $F$, $F_{\be v}$ will denote the completion of $F$ at $v$. For each prime $v$ of $F$, we choose and fix a prime $\overline{v}$ of $\fs$ lying above $v$ and write $w$ for the prime of $K$ lying below $\overline{v}$. The decomposition group of $w$ in $\g$ will be denoted by $\gv$ and identified with $\textrm{Gal}(\kwv\le/F_{v})$.

If $v$ is an archimedean prime of $F$ and $i$ is any integer, $H^{\e
i}(F_{\be v}, M\le)$ will denote the $i$-th Tate cohomology group of $M_{\lbe F_{\be v}}$ defined in \cite[p.~103]{hs1}. For every $i\in\Z$, $H^{\e i}(F_{\be v}, M\le)$ is a finite $2$-torsion group.

\begin{lemma}\label{lexp} For every integer $i\geq -1$, $\prod_{\e\rm{all}\,\text{v}}\lbe H^{\le i}\lbe(\kwvf\le, M\le)$ is a group of finite exponent annihilated by $[\e K\lbe\colon\be F\e]$.
\end{lemma}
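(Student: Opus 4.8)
The plan is to establish the claim factor-by-factor and then pass to the product, using the local analogue of Lemma~\ref{hi} together with the elementary fact that each local degree divides $[\e K\lbe\colon\be F\,]$. First I would unwind the notation: applying the definition \eqref{hlg} with the local field $F_{\be v}$ in place of $F$, the factor $H^{\le i}(\kwvf, M\e)$ is the kernel of the local restriction map
\[
\res^{\le i}\colon H^{\le i}(F_{\be v}, M\e)\lra H^{\le i}(\kwv, M\e)^{\gv},
\]
where $\gv=\textrm{Gal}(\kwv\le/F_{\be v})$ is the decomposition group, of order $[\e\kwv\lbe\colon\be F_{\be v}\,]$. Thus each factor is the kernel of a restriction map attached to the finite Galois extension of local fields $\kwv\le/F_{\be v}$.

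For each non-archimedean prime $v$, I would then apply Lemma~\ref{hi} over the base field $F_{\be v}$ to the extension $\kwv\le/F_{\be v}$: the hypotheses of that lemma require only that the base be a field and the extension a finite Galois subextension of a separable closure, both of which hold here. This yields that $H^{\le i}(\kwvf, M\e)$ is annihilated by $[\e\kwv\lbe\colon\be F_{\be v}\,]$. Since $[\e\kwv\lbe\colon\be F_{\be v}\,]$ divides $[\e K\lbe\colon\be F\,]$ for every $v$, each such factor, and therefore the whole product, is annihilated by $[\e K\lbe\colon\be F\,]$.

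The one point needing separate care, and the main subtlety, is the finitely many archimedean primes, where $H^{\le i}(F_{\be v}, M\e)$ is a Tate cohomology group, so the Weil-restriction and Shapiro's-lemma argument underlying Lemma~\ref{hi} does not transfer verbatim. Here I would argue directly: $H^{\le i}(\kwvf, M\e)$ is a subgroup of the finite $2$-torsion group $H^{\le i}(F_{\be v}, M\e)$, hence is itself $2$-torsion. If $\kwv=F_{\be v}$ then $\gv$ is trivial, $\res^{\le i}$ is the identity, and the kernel vanishes; if instead $\kwv\be/F_{\be v}$ is $\mathbb C/\mathbb R$, then $[\e\kwv\lbe\colon\be F_{\be v}\,]=2$ divides $[\e K\lbe\colon\be F\,]$, so the $2$-torsion factor is annihilated by $[\e K\lbe\colon\be F\,]$. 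Alternatively, the restriction-corestriction relation $\mathrm{cor}^{\le i}\be\circ\res^{\le i}=[\e\kwv\lbe\colon\be F_{\be v}\,]$ for the Tate cohomology of $\gv$ annihilates $\krn\be(\res^{\le i})$ by the local degree. In every case the factor is annihilated by $[\e K\lbe\colon\be F\,]$.
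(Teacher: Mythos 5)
Your proof is correct and takes essentially the same route as the paper's (one-line) proof: apply Lemma \ref{hi} to each local extension $\kwv/F_{\be v}$ and note that $[\e\kwv\lbe\colon\be F_{\be v}\,]$ divides $[\e K\lbe\colon\be F\,]$. Your separate treatment of the archimedean places is a welcome extra precision: the paper invokes Lemma \ref{hi} at all primes without comment, even though at real places the groups are Tate cohomology groups so that lemma does not literally apply, and your direct argument (the kernel is $2$-torsion, and either $\gv$ is trivial or $[\e\kwv\lbe\colon\be F_{\be v}\,]=2$ divides $[\e K\lbe\colon\be F\,]$) correctly closes that small gap.
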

\begin{proof} The lemma is clear since each group $H^{\e i}(\kwvf\le, M\le)$ \eqref{hlg} is annihilated by $[\e\kwv\lbe\colon\be F_{\be v}\le]$ by Lemma \ref{hi} and $[\e\kwv\lbe\colon\be F_{\be v}\le]$ divides $[\e K\lbe\colon\be F\e]$ for every prime $v$ of $F$.
\end{proof}

\smallskip

\section{Tate-Shafarevich groups}\label{gr}

For every finite set $S$ of primes of $F$ and integer $i$, set\,\footnote{We warn the reader against confusing $\sha^{\le i}_{S}(F,M\e)$ with the similarly-denoted Tate-Shafarevich group associated to the Galois group of the maximal extension of $F$ which is unramified outside of $S$.}
\[
\sha^{\le i}_{S}(F,M\e)=\krn\!\be\left[\e H^{\le i}(F,
M\e)\to\displaystyle\prod_{v\notin S}H^{\e i}(F_{\be v},M\e)\right].
\]
The Tate-Shafarevich group in degree $i$ of $M$ is the group $\sha^{\le i}_{\emptyset}(F,M\e)$. Now let $\sha^{\e i}_{\le\omega}(F, M\le)$ denote the subgroup of $H^{\e i}(F,M\le)$ of all classes which are locally trivial at all but finitely many primes of $F$. Thus
\begin{equation}\label{union}
\sha^{\le i}_{\e\omega}\lbe(F,
M\le)=\displaystyle\bigcup_{S}\e\sha^{\le i}_{S}(F,
M\le)\,\subset\, H^{\le i}(F,M\e),
\end{equation}
where the union extends over all finite sets $S$ of primes of $F$. For $i=1$ and 2, we will write
\begin{equation}\label{sci}
{\rm C}_{\omega}^{\e i}\lbe(K/F, M\e)=\cok\!\be\left[\e\sha_{\omega}^{\le i}(F, M\le)\overset{\!\res^{\lbe i}_{\lbe\omega}}{\lra}\sha_{\omega}^{\le i}(K,M\le)^{\g}\,\right]\lbe.
\end{equation}

\begin{remarks}\label{spy} \indent
\begin{itemize}
\item[(a)] If $M$ is an arbitrary 1-motive, then 
$\sha^{\e 2}_{\le\omega}(F,M\le)=H^{\e 2}(F,M\le)$ by \cite[p.~117, first paragraph]{hs1}. Consequently, ${\rm C}_{\omega}^{\e 2}\lbe(K/F, M\e)=
{\rm C}^{\e 2}\lbe(K/F, M\e)$ \eqref{hlgc}.
\item[(b)] If $\e\sha^{\e 1}(F,A\le)$ is finite, then $\sha^{\e 1}(F,M\le)$ is finite. Indeed, if $\ell\neq p$, then the finiteness of $\sha^{\e 1}(F,M\le)(\ell\le)$ follows from that of $\sha^{\e 1}(F,A\le)(\ell\le)$ by \cite[Proposition 3.7 and proof of Theorem 4.8]{hs1}. See also \cite[Remark 5.10]{hs1} and note that the hypothesis of \cite[Proposition 3.7]{hs1} is satisfied by \cite[Remark I.6.14(c), p.~84]{adt}. Now, if $F$ is a function field with associated curve $C$, then \cite[Lemma 6.5]{ga09} shows that $\sha^{\e 1}(F,M\le)(\e p)=D^{1}(U,\s M\e)(\e p)$ for an appropriate affine, open and dense subscheme $U$ of $C$, where $\s M$ is a 1-motive over $U$ whose generic fiber is $M$ and  $D^{1}(U,\s M\e)=\krn[\e H^{1}(U,\s M\e)\to\bigoplus_{v\notin U}H^{1}(F_{\be v},M\e)]$. The finiteness of $D^{1}(U,\s M\e)(\e p)$ follows from that of $\,\sha^{\e 1}(F,A)(\e p)$, as in the proof of \cite[Corollary 3.7, p.~111]{hs1}. See \cite[proof of Lemma 5.4, p.~222]{ga09}. 
\end{itemize}
\end{remarks}

\begin{lemma}\label{sfex} If $\le i=1$ or $2$, ${\rm C}_{\omega}^{\e i}\lbe(K/F, M\e)$ \eqref{sci} is a group of finite exponent annihilated by a power of $[\e K\lbe\colon\lbe \! F\,]$.
\end{lemma}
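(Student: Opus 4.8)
The plan is to treat the cases $i=1$ and $i=2$ separately, reducing each to the control of the \emph{full} cokernel ${\rm C}^{\e i}(K/F,M)$ already established in Lemma \ref{hi0}. The case $i=2$ is immediate: by Remark \ref{spy}(a) we have $\sha_{\omega}^{\e 2}(F,M)=H^{\e 2}(F,M)$ over both $F$ and $K$, whence $\res^{2}_{\omega}=\res^{2}$ and ${\rm C}_{\omega}^{\e 2}(K/F,M)={\rm C}^{\e 2}(K/F,M)$, which is annihilated by a power of $[\e K\lbe\colon\be F\e]$ by Lemma \ref{hi0}. The substance of the lemma therefore lies in the case $i=1$.

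For $i=1$ I would run a direct annihilation argument (rather than the ker--coker formalism of Proposition \ref{ker-cok}, although that works too). Fix $\xi\in\sha_{\omega}^{\e 1}(K,M)^{\g}$. Since ${\rm C}^{\e 1}(K/F,M)$ is annihilated by some power $[\e K\lbe\colon\be F\e]^{N}$ by Lemma \ref{hi0}, the class $[\e K\lbe\colon\be F\e]^{N}\xi$ admits a lift along the full restriction map: there exists $c\in H^{\e 1}(F,M)$ with $\res^{1}(c)=[\e K\lbe\colon\be F\e]^{N}\xi$. The obstruction to finishing is that there is no reason for $c$ itself to lie in $\sha_{\omega}^{\e 1}(F,M)$, so $c$ is not yet a preimage for $\res^{1}_{\omega}$; the whole point is to repair this up to a bounded factor.

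The key step is a local comparison. Because $\xi$, and hence $[\e K\lbe\colon\be F\e]^{N}\xi=\res^{1}(c)$, lies in $\sha_{\omega}^{\e 1}(K,M)$, it is locally trivial outside a finite set $S$ of primes of $K$; let $S_{\be F}$ be the finite set of primes of $F$ lying below $S$, enlarged to contain all archimedean primes. For every $v\notin S_{\be F}$ the fixed prime $w$ of $K$ above $v$ lies outside $S$, so by compatibility of localization with restriction the localization $c_{v}\in H^{\e 1}(F_{\be v},M)$ satisfies $\res_{v}(c_{v})=(\res^{1}c)_{w}=0$. Thus $c_{v}\in\krn\be[\e\res_{v}\colon H^{\e 1}(F_{\be v},M)\to H^{\e 1}(\kwv,M)\e]=H^{\e 1}(\kwvf,M)$, which by the local instance of Lemma \ref{hi} is annihilated by $[\e\kwv\lbe\colon\be F_{\be v}\e]$, a divisor of $[\e K\lbe\colon\be F\e]$. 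Consequently $[\e K\lbe\colon\be F\e]\,c$ is locally trivial at every $v\notin S_{\be F}$, i.e.\ $[\e K\lbe\colon\be F\e]\,c\in\sha_{S_{\be F}}^{\e 1}(F,M)\subset\sha_{\omega}^{\e 1}(F,M)$. Applying $\res^{1}_{\omega}$ gives $\res^{1}_{\omega}\be\big([\e K\lbe\colon\be F\e]\,c\big)=[\e K\lbe\colon\be F\e]^{N+1}\xi$, so every element of $\sha_{\omega}^{\e 1}(K,M)^{\g}$ is killed by $[\e K\lbe\colon\be F\e]^{N+1}$ in the cokernel, proving that ${\rm C}_{\omega}^{\e 1}(K/F,M)$ is annihilated by $[\e K\lbe\colon\be F\e]^{N+1}$.

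I expect the main obstacle to be precisely the bridge carried out in the last paragraph: the lift $c$ is a global class carrying no a priori local-triviality, and one must pass between ``almost all primes of $K$'' and ``almost all primes of $F$'' while bounding the local restriction kernels uniformly. The archimedean primes cause no difficulty, since there are only finitely many and they may simply be absorbed into the excluded finite set $S_{\be F}$; the non-archimedean local kernels are uniformly annihilated by $[\e K\lbe\colon\be F\e]$ through Lemma \ref{hi}, which is what makes the single extra factor of $[\e K\lbe\colon\be F\e]$ suffice.
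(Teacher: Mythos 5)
Your proposal is correct and follows essentially the same route as the paper: the case $i=2$ is handled identically via Remark \ref{spy}(a) and Lemma \ref{hi0}, and for $i=1$ your element-wise chase (lift $[\e K\lbe\colon\be F\e]^{N}\xi$ using the cokernel bound of Lemma \ref{hi0}, then correct the lift locally using the kernels $H^{\e 1}(\kwvf,M\e)$ annihilated by $[\e K\lbe\colon\be F\e]$ as in Lemmas \ref{hi} and \ref{lexp}) is exactly the diagram chase the paper performs with its exact commutative diagram over a finite set $S$, with the same conclusion that the annihilating power is uniform. The only cosmetic difference is that you localize only at the fixed prime $w$ above each $v$, thereby bypassing the semilocal identification from \cite[\S2.1]{ch} that the paper invokes.
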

\begin{proof} By Remark \ref{spy}(a)\,, ${\rm C}_{\omega}^{\e 2}\lbe(K/F, M\e)=
{\rm C}^{\e 2}\lbe(K/F, M\e)$ and the latter group is  annihilated by a power of $[\e K\lbe\colon\lbe \! F\,]$ by Lemma \ref{hi0}. Now assume that $i=1$, let $S$ be any finite set of primes of $F$ and let $S^{\e\prime}$ be the set of primes of $K$ lying above the primes in $S$. Now, by the semilocal theory of \cite[\S2.1]{ch}, there exist canonical isomorphisms 
\[
\big(\textstyle\prod_{\e w^{\prime}\notin S^{\le\prime}}H^{\le 1}\lbe(K_{w^{\prime}},M\e)\big)^{\g}=
\big(\textstyle\prod_{v\notin S}\textstyle\prod_{\e w^{\prime}|v}H^{\le 1}\lbe(K_{w^{\prime}},M\e)\big)^{\g}\simeq\displaystyle\prod_{v\notin S}H^{\e 1}(K_{w},M\e)^{\gv}
\]
where, for every prime $v\notin S$, $w$ is the prime of $K$ lying above $v$ fixed previously. Identifying the preceding groups via the indicated isomorphism, we obtain a canonical exact and commutative diagram of abelian groups
\[
\xymatrix{0\ar[r]& \sha^{\le 1}_{S}(F,M\e)\ar[d]^{\res_{\lbe S}^{\lbe 1}}\ar[r]& H^{\le 1}(F,M\e)\ar[d]^{\res^{\lbe 1}}\ar[r]& \displaystyle\prod_{v\notin S}H^{\e 1}(F_{\be v},M\e)\ar[d]^{\prod_{v\notin S}\res_{\lbe v}^{\lbe 1}}\\
0\ar[r]&\sha_{S^{\le\prime}}^{\le 1}(K,M\le)^{\g}\ar[r]& H^{\le 1}(K,M\e)^{\g}\ar[r]&
\displaystyle\prod_{v\notin S}H^{\e 1}(K_{w},M\e)^{\gv}.}
\]
By Lemma \ref{hi0}, the cokernel of the middle vertical map is annihilated by a power of $[\e K\lbe\colon\lbe \! F\,]$. On the other hand, by Lemma \ref{lexp}, the kernel of the right-hand vertical map above is annihilated by $[\e K\lbe\colon\lbe \! F\,]$. We conclude that the cokernel of $\res_{\lbe S}^{\le 1}$ is annihilated by a power of $[\e K\lbe\colon\lbe \! F\,]$ {\it which is independent of the choice of $S$}. The lemma follows.
\end{proof}

\begin{lemma}\label{fcot} Let $\ell$ be a prime number. Then $\sha^{\e i}(F, M\le)(\e\ell\,)$ is a group of finite cotype for $i=1$ and $i=2$.
\end{lemma}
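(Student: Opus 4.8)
The plan is to exploit the three equivalent conditions of Lemma \ref{fincot}: a discrete $\ell$-primary group is of finite cotype iff its $\ell$-torsion is finite iff its Pontryagin dual is a finitely generated $\Z_{\e\ell}$-module. Two formal facts will be used throughout. First, taking $\ell$-primary parts is exact on torsion abelian groups, so any dévissage of $M$ through short exact sequences and the triangle \eqref{ygm} is compatible with passage to $(\e\ell\,)$. Second, the class of $\ell$-primary groups of finite cotype is closed under subgroups and quotients (as noted after Lemma \ref{fincot}) and also under extensions (immediate from the $\ell$-torsion criterion (iii)). I would then treat $i=1$ and $i=2$ separately, since they rest on inputs of quite different nature.

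For $i=1$ I would run the argument of Remark \ref{spy}(b) with ``of finite cotype'' in place of ``finite''. The key unconditional input is that $\sha^{\e 1}(F,A\le)(\e\ell\,)$ is of finite cotype: finiteness of the $\ell$-Selmer group of $A$ forces $\sha^{\e 1}(F,A\le)_{\ell}$ to be finite, so Lemma \ref{fincot}(iii) applies. The reduction of $\sha^{\e 1}(F,M\le)(\e\ell\,)$ to $\sha^{\e 1}(F,A\le)(\e\ell\,)$ given in Remark \ref{spy}(b) --- via \cite[Proposition 3.7 and proof of Theorem 4.8]{hs1} for $\ell\neq p$ and via \cite[Lemma 6.5]{ga09} in the function field case $\ell=p$ --- exhibits $\sha^{\e 1}(F,M\le)(\e\ell\,)$ as a subquotient of $\sha^{\e 1}(F,A\le)(\e\ell\,)$ up to finite groups contributed by the torus and lattice parts of $M$, hence of finite cotype.

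For $i=2$ I would argue by duality, \emph{not} by a long exact sequence of $\sha$-groups; the latter is not exact, its failure being measured by cokernels of localization maps and being exactly the source of Jossen's infinite $\sha^{\e 2}\lbe(\Q,G\e)$ \cite[\S7]{joss}. Using the pairing $b\colon M\!\otimes^{\mathbf L}\!M^{\le *}\!\to\bg_{m,F}[1]$ and the Poitou--Tate formalism for 1-motives (Harari--Szamuely \cite{hs1} for $\ell\neq p$, in particular for all $\ell$ when $F$ is a number field), $\sha^{\e 2}(F,M\le)(\e\ell\,)^{D}$ is identified, up to finite groups, with $\sha^{\e 0}(F,M^{\le *})^{(\e\ell\e)}$. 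This last group is a finitely generated $\Z_{\e\ell}$-module because $\sha^{\e 0}(F,M^{\le *})$ is finitely generated, being a subgroup of $H^{\e 0}(F,M^{\le *})$, which is itself an extension of the finite group $\krn[\e H^{\le 1}(F,Y^{*})\to H^{\le 1}(F,G^{\e *})\e]$ by the finitely generated group $\cok[\e Y^{*}(F)\to G^{\e *}(F)\e]$ (compare \eqref{h-1}). Thus $\sha^{\e 2}(F,M\le)(\e\ell\,)^{D}$ is a finitely generated $\Z_{\e\ell}$-module and Lemma \ref{fincot}(i) gives that $\sha^{\e 2}(F,M\le)(\e\ell\,)$ is of finite cotype.

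The main obstacle, and the only genuinely new case, is the $p$-primary part in degree $2$ over a global function field. There $H^{\e i}(F,-)$ is flat cohomology and the clean Poitou--Tate duality invoked above is not directly available; instead I would spread $M$ out to a 1-motive $\s M$ over an affine open $U$ of the curve attached to $F$ and control $\sha^{\e 2}(F,M\le)(\e p)$ through the analogous groups $D^{\e i}(U,\s M\e)(\e p)$ together with the flat duality and finiteness results of \cite{ga09} (in the spirit of the function field part of Remark \ref{spy}(b), and using the corrections of the Appendix). Checking that these inputs really produce a finitely generated $\Z_{\e p}$-module after dualizing --- equivalently, that the surviving $p$-torsion is finite --- is the crux of the argument.
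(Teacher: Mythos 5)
Your argument for $i=2$ breaks down at the finite generation claim. The group $\cok[\e Y^{*}(F)\to G^{\e *}(F)\e]$, and hence $H^{\e 0}(F,M^{\le *})$, is \emph{not} finitely generated in general: Mordell--Weil applies to the abelian quotient $A^{\le *}$, but $G^{\e *}$ has torus part $T^{\e *}$ (with character group $Y$), and the points of a nontrivial torus over a global field are never finitely generated --- already for $M=[\e\Z\to 0\,]$, whose dual is $M^{\le *}=\bg_{m}$, one gets $H^{\e 0}(F,M^{\le *})=F^{\times}$. This is precisely why the paper's proof does not work over $F$ at all but spreads $M$ out to a 1-motive $\s M$ over an open subscheme $U$ of the relevant curve or ring of integers: the \emph{integral} group $H^{\e 0}(U,\s M^{\le *})$ is finitely generated (unit theorem for the torus part plus Mordell--Weil; this is \cite[Lemma 3.2(3)]{hs1}, extended to function fields by d\'evissage), and it is this group that ultimately feeds the finite generation of the dual. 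A second, related, inaccuracy: the Poitou--Tate-type dual of $\sha^{\e 2}(F,M\le)(\e\ell\e)$ is not (a completion of) the naive group $\sha^{\e 0}(F,M^{\le *})$ but a Selmer group built from \emph{completed} cohomology --- in the paper, $D^{\e 1}(U,T_{\le p}(\s M^{\le *}))=\varprojlim_{\,m}D^{\e 1}(U,T_{\Z/p^{\e m}}(\s M^{\le *}))$ --- which contains a Tate-module term $T_{p}\e D^{\e 1}(U,\s M^{\le *})$ invisible to the naive $\sha^{\e 0}$; so even granting duality, your reduction lands on the wrong group. (Ironically, such Tate-module contributions are exactly the mechanism behind the infinite $\sha^{\e 2}$ of Jossen that you cite.)

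Two further points. The only genuinely new case of the lemma --- the $p$-primary part in degree $2$ over a function field --- is the one you label ``the crux'' and do not carry out, so it remains unproved in your write-up; the paper resolves it by first realizing $\sha^{\e 2}(F,M\le)(\e\ell\e)$ as a quotient of $D^{\e 2}(U,\s M\e)(\e\ell\e)$, via the exact sequence $0\to B(U,\s M\e)\to D^{\e 2}(U,\s M\e)\to\sha^{\e 2}(F,M\le)\to 0$ of \cite[proof of Proposition 4.12]{hs1} combined with the Tor sequence of Lemma \ref{wk}, and then dualizing $D^{\e 2}(U,\s M\e)(\e p\e)$ against $D^{\e 1}(U,T_{\le p}(\s M^{\le *}))$ using \cite{ga09}, reducing everything to the integral finite generation statement above. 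Finally, your route for $i=1$ (finiteness of the $\ell$-Selmer group of $A$ gives that $\sha^{\e 1}(F,A\le)(\e\ell\e)$ is of finite cotype, then d\'evissage) is plausible but roundabout compared with the paper, which identifies $\sha^{\e 1}(F,M\le)(\e\ell\e)=D^{\e 1}(U,\s M\e)(\e\ell\e)\subset H^{\e 1}(U,\s M\e)(\e\ell\e)$ and quotes finite cotype of the latter, with no reference to $A$; note that your claim that the d\'evissage exhibits $\sha^{\e 1}(F,M\le)(\e\ell\e)$ as a subquotient of $\sha^{\e 1}(F,A\le)(\e\ell\e)$ up to finite groups is stronger than what the statements cited in Remark \ref{spy}(b) assert, and would itself have to be extracted from the proofs in \cite{hs1}.
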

\begin{proof} If $\ell\neq p$, there exists a nonempty open affine subscheme $U$ of either the spectrum of the ring of integers of $F$ (in the number field case) or of the curve $C$ associated to $F$ (in the function field case) such that $\ell$ is invertible on $U$, $M$ extends to a 1-motive $\s M$ over $U$ and $\sha^{\e 1}(F, M\le)(\e\ell\e)=D^{\e 1}(U,\s M\e)(\e\ell\e)$, where $D^{\e i}(U,\s M\e)=\krn[\e H^{\e i}(U,\s M\e)\to\bigoplus_{v\notin U}H^{\e i}(F_{\be v},M\e)]$. See \cite[Remark 5.10 and proof of Theorem 4.8]{hs1}. The case $i=1$ and $\ell\neq p$ of the lemma now follows from \cite[Lemma 3.2(2)]{hs1}. For the case $i=1$ and $\ell=p$ of the lemma, see  \cite[Lemma 6.5]{ga09}\,. We now address the case $i=2$. It is shown in
\cite[proof of Proposition 4.12, p.~116]{hs1} that, for every sufficiently small set $U$ as above, there exists an exact sequence of torsion \cite[Lemma 3.2(1)]{hs1} abelian groups
\[
0\to B(U,\s M\e)\to D^{\e 2}(U,\s M\e)\to 
\sha^{\e 2}(F, M\le)\to 0,
\]
where $B(U,\s M\e)$ is defined as the kernel of the map $D^{\e 2}(U,\s M\e)\to 
\sha^{\e 2}(F, M\le)$ induced by the canonical map $H^{\le 2}(U,\s M\e)\to 
H^{\le 2}(F,M\le)$. Now let $m\geq 1$ be an integer and apply Lemma \ref{wk} to the preceding short exact sequence setting $n=\ell^{\e m}$ in that lemma. We obtain an exact sequence of abelian groups $D^{\e 2}(U,\s M\e)_{\e\ell^{m}}\to 
\sha^{\e 2}(F, M\le)_{\e\ell^{m}}\to B(U,\s M\e)\otimes_{\le\Z}\Z/\ell^{\e m}$. Taking direct limits over $m$ above and noting that $\varinjlim_{\e m}\e B(U,\s M\e)\otimes_{\le\Z}\Z/\ell^{\e m}=B(U,\s M\e)\le\otimes_{\e\Z}\e \Q_{\e\ell}/\le\Z_{\e\ell}=0$ since $B(U,\s M\e)$ is torsion and $\Q_{\e\ell}/\le\Z_{\e\ell}$ is divisible, we conclude that $\sha^{\e 2}(F, M\le)(\e\ell\e)$ is a quotient of $D^{\e 2}(U,\s M\e)(\e\ell\e)$ for any prime $\ell$. Now, if $\ell\neq p$ is invertible on $U$, then $D^{\e 2}(U,\s M\e)(\e\ell\e)$ is a group of finite cotype by \cite[Lemma 3.2(2)]{hs1}. Thus it remains only to check that $D^{\e 2}(U,\s M\e)(\e p\e)$ is a group of finite cotype for any set $U$ as above (in the function field case). By \cite[proof of Theorem 5.10, p.~227]{ga09}, there exists a perfect pairing of topological abelian groups
\[
D^{\e 1}(U,T_{\le p}\le(\s M^{\le *}\e)\e)\times D^{\e 2}(U,\s M)(\e p\e)
\to\Q_{\e p}/\le\Z_{\e p},
\]
where $D^{\e 1}(U,T_{\le p}(\s M^{\le *}))=\varprojlim_{\,m} D^{\e 1}(U,T_{\Z/p^{\e m}}(\s M^{\le *}))$. Thus we are reduced to checking that
$D^{\e 1}(U,T_{\le p}\le(\s M^{\le *}\e))$ is a finitely generated $\Z_{\e p}$-module. To this end, we recall from \cite[proof of Lemma 5.8(b), p.~226, line 3]{ga09} the exact sequence
of abelian groups
\[
0\to D^{\e 0,\e(p)}(U,\s M^{\le *}\lbe)\to D^{\e 1}(U,T_{p}(\s M^{\le *}\lbe))\to
T_{p}\,D^{\e 1}(U,\s M^{\le *}),
\]
where $D^{\e 0,\e(p)}(U,\s M^{\le *})=\krn[\,{H}^{\e 0}(U,\s
M^{\le *})^{(\e p)}\to\bigoplus_{v\notin U}{H}^{\e
0}(F_{\be v},M^{\le *})^{(\e p)}\,]$. The right-hand group above is a finitely generated $\Z_{\e p}$-module by Lemma \ref{fincot}\, since $D^{\e 1}(U,\s M^{\le *})_{\le p}$ is finite by \cite[proof of Lemma 5.4, p.~222]{ga09}. To show that  $D^{\e 0,\e(p)}(U,\s M^{\le *})$ is also a finitely generated $\Z_{\e p}$-module, it suffices to check that ${H}^{\e 0}(U,\s M^{\le *})$ is a finitely generated abelian group. The corresponding statement for number fields is established in \cite[Lemma 3.2(3)]{hs1} via a devissage argument that also works for function fields. This completes the proof.
\end{proof}

\section{Proof of Theorem \ref{main1}}\label{mai1}

In this Section we prove Theorem \ref{main1}\,. The main ingredients of the proof are Lemmas \ref{lexp}\,, \ref{sfex} and \ref{fcot}.

For $i=1$ or 2, let
\begin{equation}\label{loc}
\lambda^{\lbe i}(F,M\le)\colon \sha^{\e i}_{\omega}(F,M\le)\to
\bigoplus_{\text{all $v$}}H^{\e i}(F_{\be v},M\le)\e,
\end{equation}
be the canonical localization map whose $v$-component is induced by the restriction map ${H}^{\e i}(F,M\le)\to H^{\e i}(F_{\be v},M\le)$. There exists a canonical exact sequence of discrete and torsion abelian groups
\begin{equation}\label{si}
0\to\sha^{\e i}(F,M\le)\to\sha^{\e i}_{\omega}(F,M\le)\overset{\lambda^{\be i}}
\lra\displaystyle\bigoplus_{\text{all $v$}}H^{\e
i}(F_{\be v},M\le)\to\tha^{\le i}(F,M\le)\to 0,
\end{equation}
where $\lambda^{\lbe i}=\lambda^{\lbe i}(F,M\le)$ and $\tha^{i}(F, M\le)=\cok\lambda^{\lbe i}(F,M\le)$. Now set 
\begin{equation}\label{col}
\sha^{\le i}_{\omega}(K/F, M\e)=H^{\le i}(K/F, M\e)\cap \sha^{\le i}_{\omega}(F,M\e),
\end{equation}
where $H^{\le i}(K/F, M\e)$ is the group \eqref{hlg} and the intersection takes place inside $H^{\le i}(F, M\e)$. 

We define a map 
\begin{equation}\label{l1k}
\lambda^{\lbe i}_{\lbe K\be/\lbe F}=\lambda^{\lbe i}(K/F, M\e)\colon \sha^{\le i}_{\omega}(K/F, M\e)\to \bigoplus_{\text{all $v$}}H^{\e i}(\kwvf\le,M\e)
\end{equation}
and groups
\begin{equation}\label{l1k-0}
\sha^{\e i}\lbe(K/F,M\e)=\krn \lambda^{\lbe i}_{\lbe K\be/\lbe F}
\end{equation}
and
\begin{equation}\label{l1k-1}
\tha^{\le i}\lbe(K/F,M\e)=\cok \lambda^{\lbe i}_{\lbe K\be/\lbe F}
\end{equation}
so that the following diagram, whose bottom row is the exact sequence \eqref{si}, is exact and commutative:
\begin{equation}\label{4}
\xymatrix{\sha^{\le i}\lbe(K/F,M\e)\,\ar@{^{(}->}[d]\ar@{^{(}->}[r]& \sha^{\le i}_{\omega}(K/F, M\e)\ar@{^{(}->}[d]\ar[r]^(.415){\lambda^{\be i}_{\lbe K\be/\be F}}& \displaystyle\bigoplus_{\text{all $v$}}H^{\le i}(\kwvf\le,M\e)\ar@{^{(}->}[d]\ar@{->>}[r]& \tha^{\le i}\lbe(K/F,M\e)\ar[d]\\
\sha^{\le i}\lbe(F,M\e)\,\ar@{^{(}->}[r]& \sha^{\le i}_{\omega}(F,M\e)\ar[r]\ar[r]^(.45){\lambda^{\be i}}& \displaystyle\bigoplus_{\text{all $v$}}H^{\le i}(F_{\be v},M\e)\ar@{->>}[r]& \tha^{\le i}\lbe(F,M\e).}
\end{equation}

\begin{lemma}\label{shx} The groups $\sha^{\le i}\lbe(K/F,M\e)$ \eqref{l1k-0} and $\tha^{\le i}\lbe (K/F,M\e)$ \eqref{l1k-1} are annihilated by $[\e K\lbe\colon \! F\,]$.
\end{lemma}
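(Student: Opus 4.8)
The plan is to deduce both assertions immediately from the two annihilation results already established, using only the definitions of the groups involved: Lemma~\ref{hi} will take care of the kernel $\sha^{\le i}\lbe(K/F,M\e)$, and Lemma~\ref{lexp} will take care of the cokernel $\tha^{\le i}\lbe(K/F,M\e)$.

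For the kernel, I would argue as follows. By definition \eqref{l1k-0}, $\sha^{\le i}\lbe(K/F,M\e)=\krn\lambda^{\lbe i}_{\lbe K\be/\lbe F}$ is a subgroup of the domain $\sha^{\le i}_{\omega}(K/F, M\e)$ of $\lambda^{\lbe i}_{\lbe K\be/\lbe F}$. By the defining intersection \eqref{col}, this domain is in turn a subgroup of $H^{\le i}(K/F, M\e)$. Composing these inclusions (which are also visible as the top hooked arrows of diagram \eqref{4}) yields $\sha^{\le i}\lbe(K/F,M\e)\subseteq H^{\le i}(K/F, M\e)$. Since Lemma~\ref{hi} asserts that $H^{\le i}(K/F, M\e)$ is annihilated by $[\e K\lbe\colon\be F\,]$, the same holds for every one of its subgroups, hence for $\sha^{\le i}\lbe(K/F,M\e)$.

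For the cokernel, I would use the dual observation. By definition \eqref{l1k-1}, $\tha^{\le i}\lbe(K/F,M\e)=\cok\lambda^{\lbe i}_{\lbe K\be/\lbe F}$ is a quotient of the codomain $\bigoplus_{\text{all $v$}}H^{\le i}(\kwvf\le,M\e)$ of $\lambda^{\lbe i}_{\lbe K\be/\lbe F}$. This direct sum is a subgroup of the product $\prod_{\text{all $v$}}H^{\le i}(\kwvf\le,M\e)$, which Lemma~\ref{lexp} shows to be annihilated by $[\e K\lbe\colon\be F\,]$. Hence the codomain is annihilated by $[\e K\lbe\colon\be F\,]$, and therefore so is any of its quotients, in particular $\tha^{\le i}\lbe(K/F,M\e)$.

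I do not expect any real obstacle here: once the relevant subgroup and quotient relations are read off from the definitions \eqref{col}, \eqref{l1k-0} and \eqref{l1k-1}, both claims follow verbatim from Lemmas~\ref{hi} and \ref{lexp}, with the \emph{same} annihilator $[\e K\lbe\colon\be F\,]$ and no spurious extra factor. The only point meriting a moment's attention is to confirm that the containments above are genuine inclusions of abelian groups, so that annihilation passes along unchanged; this is guaranteed by the exactness of diagram \eqref{4}.
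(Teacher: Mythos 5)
Your proof is correct and is essentially identical to the paper's own argument: the paper likewise observes that the source \eqref{col} of $\lambda^{\lbe i}_{\lbe K\be/\lbe F}$ is annihilated by $[\e K\lbe\colon \! F\,]$ via Lemma~\ref{hi} and the target via Lemma~\ref{lexp}, so that the kernel and cokernel inherit the same annihilator as subgroup and quotient respectively. Your only addition is to spell out the inclusion of the direct sum into the product in Lemma~\ref{lexp}, which the paper leaves implicit.
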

\begin{proof} By Lemmas \ref{hi} and \ref{lexp}\,, respectively, the source \eqref{col} and the target of the map $\lambda^{\lbe i}(K/F, M\e)$ \eqref{l1k} are annihilated by $[\e K\lbe\colon \! F\,]$, which immediately yields the lemma.
\end{proof}

We now observe that the kernel of $\res_{\lbe\omega}^{\le i}\colon \sha^{\le i}_{\omega}(F,M\e)\to \sha^{\le i}_{\omega}(K,M\e)^{\g}$ equals $H^{\le i}(K/F, M\e)\cap \sha^{\le i}_{\omega}(F,M\e)=\sha^{\le i}_{\omega}(K/F, M\e)$ \eqref{col}. Restricting the preceding map to $\sha^{\le i}(F,M\e)\subset \sha^{\le i}_{\omega}(F,M\e)$, we obtain a map
\begin{equation}\label{r2}
\widetilde{\res}^{\le i}\,\colon\, \sha^{\le i}(F,M\e)\to \sha^{\le i}\lbe(K,M\e)^{\g}\be\cap \res_{\lbe\omega}^{\le i}\le(\sha^{\le i}_{\omega}(F,M\e))
\end{equation}
such that the composite map
\begin{equation}\label{fact}
\sha^{\le i}(F,M\e)\,\overset{\!\widetilde{\res}^{\le i}}{\lra}\,\sha^{\le i}\lbe(K,M\e)^{\g}\be\cap \res_{\lbe\omega}^{\le i}\le(\sha^{\le i}_{\omega}(F,M\e))\hookrightarrow \sha^{\le i}\lbe(K,M\e)^{\g}
\end{equation}
equals $\res^{\le i}\colon\, \sha^{\le i}(F,M\e)\to \sha^{\le i}\lbe(K,M\e)^{\g}$.

Next, we introduce the group
\begin{equation}\label{tr}
\tra\!\left(\sha^{\le i}\be(K,M\e)^{\g}\e\right)=\displaystyle\frac{\sha^{\le i}\lbe(K,M\e)^{\g}}{\sha^{\le i}\lbe(K,M\e)^{\g}\be\cap \res_{\lbe\omega}^{\le i}\le(\sha^{\le i}_{\omega}(F,M\e))}.
\end{equation}
Note that
\[
\tra\!\left(\sha^{\le i}\be(K,M\e)^{\g}\e\right)
\simeq\displaystyle\frac{\sha^{\le i}\lbe(K,M\e)^{\g}+\res_{\lbe\omega}^{\le i}\le(\sha^{\le i}_{\omega}(F,M\e))}{\res_{\lbe\omega}^{\le i}\le(\sha^{\le i}_{\omega}(F,M\e))}\subset {\rm C}_{\omega}^{\e i}\lbe(K/F, M\e),
\]
where ${\rm C}_{\omega}^{\le i}\lbe(K/F, M\e)$ is given by \eqref{sci}. Since $\sha^{\le i}\be(K,M\e)(\e\ell\e)$ is a group of finite cotype for every prime $\ell$ by Lemma \ref{fcot} and ${\rm C}_{\omega}^{\le i}\lbe(K/F, M\e)$ is a group of finite exponent annihilated by a power of $[\e K\lbe\colon\lbe \! F\,]$ by Lemma \ref{sfex}\,, \eqref{tr} is a {\it finite} group annihilated by a power of $[\e K\lbe\colon\lbe \! F\,]$.

\begin{remark}\label{nwt} Our choice of notation in \eqref{tr} is motivated by the fact that, if $G$ is a semiabelian variety over $F$, then
\[
\tra\!\left(\sha^{\le 1}\be(K,G\e)^{\g}\e\right)\simeq\trans\!\left(\sha^{\le 1}\be(K,G\e)^{\g}\e\right)\subset  H^{\e 2}(\g, G(K\le)),
\]
where $\trans\colon H^{\e 1}(K, G\le)^{\g}\!\be\to\be H^{\e 2}(\g, G(K\le))$ is the transgression map \cite[p.~51]{sha}.
\end{remark}

We now apply Proposition \ref{ker-cok} to the pair of maps \eqref{fact} and obtain an equality
\begin{equation}\label{kk}
\krn\be(\res^{\le i})=\krn\be(\widetilde{\res}^{\le i})
\end{equation}
and a canonical exact sequence
\begin{equation}\label{pp}
0\to\cok\be(\widetilde{\res}^{\le i})\to\cok\be(\res^{\le i})\to \tra\!\left(\sha^{\le i}\be(K,M\e)^{\g}\e\right)\to 0.
\end{equation}
It is now clear that Theorem {\rm \ref{main1}} is true if, and only if, the kernel and cokernel of the map $\widetilde{\res}^{\le i}$ \eqref{r2} are finite groups annihilated by a power of $[\e K\lbe\colon\lbe \! F\,]$.

By the semilocal theory of \cite[\S2.1]{ch}, there exist isomorphisms of abelian groups
\begin{equation}\label{comp}
\left(\,\textstyle\bigoplus_{\e\text{all $w^{\prime}$}}H^{\le i}(K_{w^{\prime}},M\e)\right)^{\be\g}\simeq\textstyle\bigoplus_{\e\text{all $v$}}\!\left(\,\bigoplus_{\e w^{\prime}|v}H^{\le i}(K_{w^{\prime}},M\e)\right)^{\be\g}\simeq\textstyle\bigoplus_{\e\text{all $v$}}\e H^{\le i}(\kwv,M\e)^{\gv}
\end{equation}
where, in the last direct sum, for each prime $v$ of $F$, $w$ is the fixed prime of $K$ lying above $v$ chosen previously. We define a map
\begin{equation}\label{rlg}
\lambda_{\lbe K}^{\lbe i}\colon \res_{\lbe\omega}^{\le i}\le(\sha^{\le i}_{\omega}(F,M\e))\to \bigoplus_{\text{all$\e v$}}\e H^{\le i}(\kwv,M\e)^{\gv}
\end{equation}
by the commutativity of the diagram
\[
\xymatrix{\sha^{\le i}_{\omega}(K, M\e)^{\g}\ar[rr]^(.42){\lambda^{\lbe i}\be(\lbe K, M\e)^{\lbe\g}}&&\left(\,\e\displaystyle\bigoplus_{\text{all $w^{\prime}$}}H^{\le i}(K_{w^{\prime}},M\e)\right)^{\!\!\g}\ar[d]^{\wr}\\
\res_{\lbe\omega}^{\le i}\le(\sha^{\le i}_{\omega}(F,M\e))\ar@{^{(}->}[u]\ar[rr]^(.42){\lambda_{\lbe K}^{\be  i}}&&\displaystyle\bigoplus_{\text{all $v$}}\e H^{\le i}(\kwv,M\e)^{\gv},}
\]
where $\lambda^{\lbe i}\be(\lbe K, M\e)$ is the map \eqref{loc} associated to $M_{K}=[Y_{K}\to G_{K}]$ and the right-hand vertical map is the composition of the isomorphisms \eqref{comp}. Note that, since the kernel of $\lambda^{\lbe i}\be(\lbe K, M\e)$ equals $\sha^{\le i}\lbe(K,M\e)^{\g}$ by \eqref{si} over $K$, we have
\begin{equation}\label{ker}
\krn \lambda_{\lbe K}^{\lbe i}=\sha^{\le i}\lbe(K,M\e)^{\g}\be\cap \res_{\lbe\omega}^{\le i}\le(\sha^{\le i}_{\omega}(F,M\e)).
\end{equation}
The maps \eqref{loc}, \eqref{l1k} and \eqref{rlg} fit into an exact and commutative diagram of abelian groups
\[
\xymatrix{0\ar[r]& \sha^{\le i}_{\omega}(K/F, M\e)\ar[d]^{\lambda^{\lbe i}\lbe(\lbe K\lbe/\lbe F,M\e)}\ar[r]& \sha^{\le i}_{\omega}(F,M\e)\ar[d]^{\lambda^{\lbe i}\lbe(F,M\e)}\ar[r]& \res_{\lbe\omega}^{\le i}(\sha^{\le i}_{\omega}(F,M\e))\ar[d]^{\lambda_{\lbe K}^{\lbe i}}\ar[r]&0\\
0\ar[r]& \displaystyle\bigoplus_{\text{all $v$}}H^{\le i}(\kwvf\le,M\e)\ar[r]& \displaystyle\bigoplus_{\text{all $v$}}H^{\le i}(F_{\be v},M\e)\ar[r]&\displaystyle\bigoplus_{\text{all $v$}}H^{\le i}(\kwv,M\e)^{\gv}.}
\]
Now we apply the snake lemma to the preceding diagram, using \eqref{ker} and the definitions \eqref{l1k-0} and \eqref{l1k-1}, and obtain an exact sequence of discrete and torsion abelian groups
\[
0\to \sha^{\le i}\lbe(K/F,M\e)\to \sha^{\le i}\lbe(F,M\e)\overset{\!\widetilde{\res}^{\le i}}{\lra} \sha^{\le i}\lbe(K,M\e)^{\g}\e\cap\, \res\e(\sha^{\le i}_{\omega}(F,M\e))\to \tha^{\le i}\lbe(K/F,M\e)
\]
Since $\sha^{\le i}\lbe(F,M\e)(\e\ell\e)$ and $\sha^{\le i}\lbe(K,M\e)(\e\ell\e)$ are groups of finite cotype for every prime $\ell$ by Lemma \ref{fcot}\,, and  $\sha^{\le i}\lbe(K/F,M\e)$ and $\tha^{i}(K/F,M\e)$ are groups of finite exponent annihilated by $[\e K\lbe\colon \! F\,]$ by Lemma \ref{shx}\,, the preceding sequence together with Lemma \ref{es2} show that both $\krn\lbe(\widetilde{\res}^{\le i})$ and $\cok\lbe(\widetilde{\res}^{\le i})$ are finite groups annihilated by $[\e K\lbe\colon \! F\,]$. The proof of Theorem \ref{main1} is now complete.

\begin{remark} \label{last} It follows from the above and \eqref{kk} that $\krn\!(\res^{\le i})$ is a finite group annihilated by $[\e K\lbe\colon \! F\,]$ for $i=1$ and $2$. Further, if $M=G$ is a semiabelian variety, then \eqref{pp} and Remark \ref{nwt}\,  show that the cokernel of $\res^{\le 1}\colon \sha^{\le 1}\lbe(F,G\e)\to \sha^{\le 1}\lbe(K,G\e)^{\g}$ is annihilated by $[\e K\lbe\colon \! F\,]^{\e 2}$. 
\end{remark}

\section{Appendix}\label{app}

Since \cite{ga09} is cited in this paper, we repair here a (fortunately inconsequential) error which appears in \cite[proof of Proposition 4.10, p.~217, lines -8 to -3]{ga09}. The notation is that of [op.cit.].

Recall the element $(\xi_{\le v})\in \bigoplus_{\e v\notin U}H^{\le i}(K_{v},N\le)\times\prod_{\e v\in U}H^{\le i}(\mathcal O_{v},\s N\e)$, where the set $U$ has the property that $D^{\le 1}(V, \s N\le)=\sha^{1}(K,N\e)$ for any nonempty open subset $V$ of $U$. There exists an element $\xi_{\le U}\be\in H^{\e i}(U,\s N\e)$ such that $\xi_{\le U}|_{K_{v}}=\xi_{v}$ for all $v\notin U$. The incorrect part of the proof starts with the statement ``The assignment $(\xi_{v})_{v\notin U}\mapsto\xi_{\e U}$ is functorial in $U$" and ends with the claim ``This shows that $\krn\gamma_{i}(K,N)\subset\img\beta_{i}(K,N)$". The problem is that the indicated assignment is not even well-defined since the element $\xi_{\e U}\in H^{i}(U,\s N\e)$ is not uniquely determined. Thus, contrary to what is stated in [loc.cit.], it is not possible to obtain a well-defined element $\xi\in H^{\e i}(K,N)$ such that $\beta_{\e i}(K,N\e)(\e\xi\e)=(\xi_{v})$. This problem can be repaired as follows. The element $\xi_{\e U}\in H^{i}(U,\s N\e)$ is well-defined modulo $D^{\le i}(U, \s N\le)$, and the problem reduces to showing that the elements 
$\xi_{\e U}+D^{\le i}(U, \s N\e)\in H^{i}(U,\s N\e)/D^{\le i}(U, \s N\le)$ can be chosen so that, if $V$ is a nonempty open subset of $U$, then $\xi_{\e U}|_{V}-\xi_{V}\in D^{\le i}(V, \s N\e)$. Indeed, if this is proved, then any representative $\xi\in H^{i}(K,N\e)$ of the resulting well-defined class in $H^{i}(K,N\e)/\e\sha^{i}(K,N\e)$ satisfies $\beta_{\e i}(K,N\e)(\e\xi\e)=(\xi_{v})$ since $\sha^{i}(K,N\e)=\krn\beta_{\e i}(K,N\e)$. Now, if $i=1$ and $v\in U-V$, then $\xi_{\le V}|_{K_{v}}=\xi_{v}\in H^{\le 1}(\mathcal O_{v},\s N\e)$, whence $\xi_{\le V}\in H^{1}(U,\s N\e)\subset H^{1}(V,\s N\e)$ by \cite[Corollary 4.2 and Remark 4.3]{ces3}. Since $(\xi_{\le U}-\xi_{V})|_{K_{v}}=0$ for all $v\notin U$, we have $\xi_{\le U}-\xi_{V}\in D^{\le 1}(U, \s N\le)=\sha^{\le 1}(K,N\le)$. Consequently $\xi_{\e U}|_{V}-\xi_{V}\in \sha^{\le 1}(K,N\le)=D^{\le 1}(V, \s N\le)$, as required. If $i=2$ and $v\in U-V$, we have $\xi_{\e U}|_{K_{v}}=(\xi_{\e U}|_{\mathcal O_{v}})|_{K_{v}}=\xi_{v}=0$ since $H^{\le 2}(\mathcal O_{v},\s N\e)=0$ for all $v\in U$, whence $(\xi_{\e U}|_{V}-\xi_{V})|_{K_{v}}=0$ for all $v\notin V$, i.e., $\xi_{\e U}|_{V}-\xi_{V}\in D^{\le 2}(V, \s N\le)$.

\end{document}